\newtheorem{thm}{Theorem}
\newtheorem{example}{Example}
\newtheorem{lem}{Lemma}
\newtheorem{corollary}{Corollary}
\newtheorem{question}{Question}
\newtheorem{proposition}{Proposition}
\begin{document}
\title{Polyhedra with simple dense geodesics}
\author{Jin-Ichi Itoh}
\author{Jo\"{e}l Rouyer}
\author{Costin V\^{\i}lcu}

\begin{abstract}
We characterize polyhedral surfaces admitting a simple dense geodesic ray and convex polyhedral surfaces
with a simple geodesic ray.
\end{abstract}

\maketitle

\section{Introduction}
 
A tetrahedron is said to be isosceles if its faces are congruent to each
other. Equivalently, the singular curvature of each of its vertices is exactly $\pi$. 
These simple objects have many interesting properties, especially from viewpoint of their intrinsic geometry. 
For instance, it is easy to see that they admit infinitely long simple geodesics, as well
as arbitrarily long simple closed geodesics. 
It was proved by V. Yu. Protasov \cite{P} that this latter property characterizes them among convex polyhedra;
his result has recently been strenghtened to convex surfaces by A. Akopyan and A. Petrunin \cite{AP}.
However, there exist non-convex polyhedra with this property, as for example flat tori.

The aim of this paper is to determine (non-necessarily convex) polyhedra having simple dense geodesics. 
It turns out that the objects we found had already been studied, for they play a key role in the theory of
rational billiards. 
In \cite{MT} they are named \textquotedblleft surfaces endowed with a 
\emph{flat structure (with parallel line field)}\textquotedblright. 
One can find many variants of the name in the literature, generally emphasizing the flatness. 
Since all the surfaces considered in this paper -- namely polyhedra -- are flat (with conical singularities), we chose
to simply call them \emph{parallel polyhedra}.

Our main result (Theorem \ref{TMain}) states that a polyhedron is parallel if and only if it is orientable and admits a simple %
dense geodesic ray. In this case, at any point, a ray starting in almost any direction will be simple and dense.
We prove, moreover (Theorem \ref{convex}), that a simple geodesic ray on a convex polyhedron $P$ is dense in $P$.

This paper is about intrinsic geometry of surfaces: when we write
\textquotedblleft polyhedron\textquotedblright, we always mean a surface. 
More precisely, a compact surface flat everywhere except at finitely many
singularities, of conical type. In other words, it is a $2$-dimensional
manifold obtained by gluing (with length preserving maps) finitely many
Euclidean triangles along their edges. The conical points are called
\emph{vertices}. The (\emph{singular}) \emph{curvature} of a vertex $v$,
denoted by $\omega(v)$, is defined as $2\pi$ minus the total angle around it.
This notion of polyhedron is more general than the notion of boundary
of solid polyhedra in $\mathbb{R}^{3}$, for it also includes non-orientable
manifolds. Of course, once a polyhedron that can be embedded in $\mathbb{R}^{3}$,
it can be embedded in many different ways.

When we say that a polyhedron is convex, we mean that it is homeomorphic to
the sphere and that all its vertices have positive curvature. It is a famous
result of A. D. Alexandrov that such a polyhedron admits a unique (up to
rigid motions) realization as the boundary of a convex polyhedron in
$\mathbb{R}^{3}$ -- in the usual sense -- whence the denomination. Notice that
such a surface is also isometric to the boundary of many more non-convex
polyhedra, see for instance \cite{code61} or \cite{IRV}.


\section{Prerequisite}

In this section we recall a few definitions and give some basic results destined to be used later.

Let $P$ be a polyhedron and $I\subset\mathbb{R}$ an interval. 
A geodesic is a map $G:I\rightarrow P$ such that for any $t\in I$ and any $s\in I$ close
enough to $t$, we have $d\left(  G\left(  s\right)  ,G\left(  t\right)  \right)
=\left\vert s-t\right\vert $. 
In particular, all geodesics are parametrized by arc-length. 
When $I=\mathbb{R}$, we speak of a \emph{geodesic line}, and 
when $I\neq\mathbb{R}$ is an unbounded interval, we speak of a \emph{geodesic ray}. 
A geodesic is said to be \emph{simple} if it admits no proper self-intersection. 
That is, there is no pair of instants $t_{1}$, $t_{2}$ such that 
$G\left( t_{1}\right) =G\left( t_{2}\right)$ 
but
$G^{\prime}\left( t_{1}\right) \neq G^{\prime}\left( t_{2}\right)$, 
where
$G^{\prime}\left( t\right)$ represents the direction of $G$ at point $G\left( t\right)$. 
According to this definition, a periodic geodesic line may
be simple, though not injective; we call it a \emph{simple closed geodesic}.

It is well known and easy to see that a geodesic cannot pass through a
positively curved vertex (see for instance \cite{BCS}), but it may pass
through a negatively curved one. Indeed, a broken line through a vertex $v$
will be locally minimizing if and only if it separates the space of directions at $v$ into two parts of measure at least $\pi$. It follows that geodesics may
have branch points. So, there in no good notion of geodesic flow on a
polyhedron. A geodesic which avoids all vertices will be called \emph{strict}.

We denote by $V\left( P\right)$ the set of all vertices of $P$, and  by
$P^{\ast}$ the open (Riemannian) flat manifold $P\setminus V\left( P\right)$. 
So $P^{\ast}$ carries a natural notion of parallel transport. 
The parallel transport along a curve $\gamma:I\rightarrow P^{\ast}$
will be denoted by $||_{\gamma}$.

Although these polyhedra are not \emph{Alexandrov spaces with bounded curvature}, 
small enough balls clerly are either Alexandrov spaces with
curvature bounded below, or Alexandrov spaces with curvature bounded above, in
both cases by $0$ (for definitions and basic properties, see for instance
\cite{BBI}). It follows that the definition of \emph{space of directions} of
Alexandrov spaces applies here. We denote by $\Sigma_{p}$ the space of
directions at point $p\in P$ (it is a circle); 
for any subset $Q$ of $P$, $\Sigma Q$ stands for the disjoint union $\coprod_{p\in Q}\Sigma_{p}$. 
Hence $\Sigma P^{\ast}$ is naturally identified to the unit tangent bundle over $P^{\ast}$. For $p\in
P^{\ast}$, we denote by $\Delta_{p}$ the projective line obtained as the
quotient of $\Sigma_{p}$ by the group $\left\{  \pm id\right\}  $, and by
$\Delta P^{\ast}$ the corresponding bundle over $P^{\ast}$. A section of
$\Delta P^{\ast}$ will be called a \emph{line distribution} on $P^{\ast}$. 
It is said to be \emph{parallel} if its integral curves form a geodesic foliation
and, restricted to any small domain, those integral lines become parallel
once the domain is unfolded onto a plane.

Let $u\in\Sigma_{p}$. The maximal strict geodesic starting at $p$ in direction
$u$ is denoted by $\gamma_{u}$. If $\gamma_{u}$ is not defined on $\left[
0,\infty\right[  $, \ie, if $\gamma_{u}$ meets some vertex in the positive
direction, then $u$ is said to be \emph{singular}.

\bigskip The distance between two points $p$, $q\in P$ is denoted by $d\left(
p,q\right)  $. For $r>0$, $B\left(  p,r\right)  $ stands for the open ball of
radius $r$ centered at $x$.

We start with a few simple lemmas.

\begin{lem}
\label{L_ap}
Let $[a_{1}b_{1}]$ and $[a_{2}b_{2}]$ be two segments of length
$2l$ in the standard Euclidean plane. Let $m_{i}$ be the midpoint of
$[a_{i}b_{i}]$. Assume that $m_1\ne m_2$ and that $a_{1}$ and $a_{2}$ are in the same half plane
bounded by the line through $m_{1}$ and $m_{2}$. Put $\alpha_{i}%
=\measuredangle\left(  \overrightarrow{m_{1}m_{2}},\overrightarrow{a_{i}b_{i}%
}\right)  $ and $\delta=\alpha_{2}-\alpha_{1}$.
Then the segments intersect if and only if
\[
d\left(  m_{1},m_{2}\right)  \max\left(  \left\vert \sin\alpha_{1}\right\vert
,\left\vert \sin\alpha_{2}\right\vert \right)  \leq l\left\vert \sin
\delta\right\vert \text{.}%
\]
In particular, if the segments do not intersect, then
\[
\left\vert \sin\delta\right\vert <\frac{d\left(  m_{1},m_{2}\right)  }%
{l}\text{.}%
\]
\end{lem}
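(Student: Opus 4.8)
The plan is to reduce everything to a one-line computation after a suitable choice of coordinates. Put the origin at $m_{1}$ and let $m_{2}=(d,0)$ with $d=d(m_{1},m_{2})>0$, so that $\overrightarrow{m_{1}m_{2}}$ points in the positive $x$-direction; then $[a_{i}b_{i}]$ is the image of $s\mapsto m_{i}+s(\cos\alpha_{i},\sin\alpha_{i})$ for $s\in[-l,l]$, and the second coordinate of $a_{i}$ equals $-l\sin\alpha_{i}$. Thus the hypothesis that $a_{1}$ and $a_{2}$ lie on the same side of the line $m_{1}m_{2}$ says precisely that $\sin\alpha_{1}$ and $\sin\alpha_{2}$ have the same (weak) sign, and after reflecting the whole picture across the $x$-axis if necessary I may assume $\sin\alpha_{1}\ge 0$ and $\sin\alpha_{2}\ge 0$.

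I would then handle the generic case $\sin\delta\ne 0$, in which the supporting lines of the two segments are not parallel and meet in a single point. An intersection $m_{1}+s_{1}(\cos\alpha_{1},\sin\alpha_{1})=m_{2}+s_{2}(\cos\alpha_{2},\sin\alpha_{2})$ is governed by the linear system $s_{1}\cos\alpha_{1}-s_{2}\cos\alpha_{2}=d$, $s_{1}\sin\alpha_{1}-s_{2}\sin\alpha_{2}=0$, whose determinant is $-\sin\delta$; by Cramer's rule $s_{1}=d\sin\alpha_{2}/\sin\delta$ and $s_{2}=d\sin\alpha_{1}/\sin\delta$. The two segments meet if and only if this common point of the lines lies on both of them, i.e. $|s_{1}|\le l$ and $|s_{2}|\le l$; clearing denominators turns this into $d|\sin\alpha_{1}|\le l|\sin\delta|$ and $d|\sin\alpha_{2}|\le l|\sin\delta|$ simultaneously, which is exactly $d\max(|\sin\alpha_{1}|,|\sin\alpha_{2}|)\le l|\sin\delta|$.

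It remains to dispose of the case $\sin\delta=0$, where the supporting lines are parallel. If they are distinct the segments cannot meet, and the normalization forces $\sin\alpha_{1}=\sin\alpha_{2}>0$ here (the possibility $\alpha_{2}=\alpha_{1}+\pi$ contradicts $\sin\alpha_{i}\ge 0$ unless both sines vanish), so the left-hand side of the claimed inequality is positive while the right-hand side is $0$: the inequality fails, in agreement with non-intersection. If the lines coincide then necessarily $\sin\alpha_{1}=\sin\alpha_{2}=0$, so both segments lie on the line $m_{1}m_{2}$; this last configuration is excluded once the hypothesis is read as placing $a_{1}$ and $a_{2}$ in a common \emph{open} half-plane. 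Finally the ``in particular'' clause follows at once: if the segments do not meet then $l|\sin\delta|<d\max(|\sin\alpha_{1}|,|\sin\alpha_{2}|)\le d$, hence $|\sin\delta|<d/l$. The only delicate points are the sign bookkeeping in the reduction step and the treatment of the parallel case; the substance of the argument is the single application of Cramer's rule.
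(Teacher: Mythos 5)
Your proof is correct. The paper itself gives no argument here (it states that the proof is elementary and leaves it to the reader), so there is nothing to compare against; your Cramer's rule computation is exactly the kind of verification intended, and the determinant $-\sin\delta$ and the parameters $s_{1}=d\sin\alpha_{2}/\sin\delta$, $s_{2}=d\sin\alpha_{1}/\sin\delta$ check out. Your handling of the degenerate cases is also the right call: the collinear configuration (both segments on the line $m_{1}m_{2}$) genuinely violates the stated equivalence when $d>2l$, so the hypothesis must indeed be read as placing $a_{1}$ and $a_{2}$ in a common \emph{open} half-plane, and your observation that distinct parallel supporting lines force $\sin\alpha_{1}=\sin\alpha_{2}\neq 0$ correctly disposes of the remaining parallel case.
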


The proof is elementary and is left to the reader.

\begin{lem}
\label{L_LLD}
Assume there exists on $P$ a simple dense ray $G$. Then
there exists a locally Lipschitz line distribution $D$ on $P^{\ast}$ such that
for any $t$, $D_{G\left(  t\right)}$ is tangent to $G$.
\end{lem}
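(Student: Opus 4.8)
The plan is to build $D$ by transporting the direction of $G$ along short geodesics transverse to $G$, and to use the simplicity of $G$ together with Lemma~\ref{L_ap} to show this is well-defined and locally Lipschitz. Fix a point $p\in P^{\ast}$. Since $G$ is dense, there are instants $t$ with $G(t)$ arbitrarily close to $p$; pick such a $t$ and declare $D_{p}$ to be the parallel transport of $G'(t)$ (as an element of $\Delta_{p}$, i.e.\ forgetting orientation) along the short geodesic from $G(t)$ to $p$. The first task is to check this does not depend on the choice of $t$: if $G(t_{1})$ and $G(t_{2})$ are both close to $p$, then near $p$ we may unfold a small disk into the Euclidean plane, and the two arcs of $G$ through $G(t_{1})$ and $G(t_{2})$ become two straight segments; if their directions (mod $\pi$) differed, they would cross inside the disk, contradicting simplicity of $G$. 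Here Lemma~\ref{L_ap} gives the quantitative statement: the sine of the angle between two nonintersecting chords of a small disk is controlled by the ratio of the distance between their midpoints to their length, so as $G(t_{1}),G(t_{2})\to p$ the angle goes to $0$. This simultaneously proves well-definedness (the angle must be exactly $0$) and that the assignment $p\mapsto D_{p}$ varies Lipschitz-continuously in $p$ on a neighborhood, since moving $p$ a little lets us reuse a nearby long subarc of $G$ and again invoke the chord estimate.

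More carefully, I would proceed as follows. First, cover $P^{\ast}$ by finitely many ``flat'' coordinate disks $U_{j}$, each isometric to a Euclidean disk of some fixed small radius $\rho$ (possible since $P^{\ast}$ is a flat surface and $P$ is compact, though $\rho$ must shrink near vertices---so really one takes, for each compact $K\subset P^{\ast}$, a uniform $\rho_{K}$). Inside such a disk, $G$ restricts to a countable union of straight chords. Step~1: any two of these chords are either disjoint or collinear, by simplicity of $G$ and Lemma~\ref{L_ap} applied with $l=\rho$ (chords that enter and exit a disk of radius $\rho$ have length bounded, and if they crossed strictly inside they would violate simplicity; if they share direction mod $\pi$ they lie on the same line). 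Step~2: for $p\in U_{j}$, define $D_{p}$ to be the common direction mod $\pi$ of all chords of $G\cap U_{j}$ passing within distance $\epsilon$ of $p$; density of $G$ guarantees such chords exist for every $\epsilon>0$, and Step~1 plus Lemma~\ref{L_ap} (in its ``in particular'' form) forces all these directions to converge to a single element of $\Delta_{p}$ as $\epsilon\to 0$, so $D_{p}$ is well-defined. Step~3: $D$ is the line distribution $p\mapsto D_{p}$; by construction $D_{G(t)}$ is tangent to $G$ for every $t$. Step~4: local Lipschitz continuity---given $p,q$ in the same $U_{j}$, pick a chord $c$ of $G$ passing within $\min(|p-q|, \epsilon)$ of both (density again), so $D_{p}$ and $D_{q}$ are each within $O(|p-q|/\rho)$ of the direction of $c$ by Lemma~\ref{L_ap}, whence $\angle(D_{p},D_{q})=O(|p-q|)$ with a constant depending only on $\rho$, hence uniform on any compact subset of $P^{\ast}$.

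The main obstacle I anticipate is handling the behavior near the (negatively curved) vertices: there $G$ may branch, the local injectivity radius of $P^{\ast}$ tends to $0$, and the ``flat disk'' unfolding degenerates, so the Lipschitz constant blows up---this is exactly why the statement claims only \emph{locally} Lipschitz on $P^{\ast}$ rather than globally Lipschitz on $P$. One must be careful that a chord of $G$ passing near a vertex but not through it still unfolds to a straight segment in a disk that avoids the vertex, and choose the covering disks $U_{j}$ to avoid vertices; density of $G$ in $P$ still supplies nearby chords of $G$ inside each $U_{j}$, so the argument goes through on $P^{\ast}$. A secondary, more bookkeeping-type issue is the passage to the limit $\epsilon\to 0$ in Step~2: one should note that the relevant chords have a definite minimum length (they must reach from near $p$ out to the boundary circle of $U_{j}$, or at least traverse a fixed fraction of it) so that $l$ in Lemma~\ref{L_ap} is bounded below and the estimate $|\sin\delta|<d(m_{1},m_{2})/l$ genuinely forces $\delta\to 0$; alternatively one restricts attention to the (at most countably many) chords of $G$ that cross the concentric half-radius disk, all of which then have length $\geq\rho$.
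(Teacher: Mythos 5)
Your proposal is correct and follows essentially the same route as the paper: define the tangent line field along $\mathrm{Im}(G)$, use simplicity plus Lemma \ref{L_ap} to get a Lipschitz bound whose constant depends only on the distance to the vertices, and use density of $G$ to extend to all of $P^{\ast}$. Your version merely makes the extension explicit as a limit of nearby chord directions (and is, if anything, more careful than the paper about the lower bound on chord length needed to apply Lemma \ref{L_ap}).
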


\begin{proof}
First define $D$ on $\mathrm{Im}\left(  G\right)  $ in the obvious way. Denote
by $P^{\varepsilon}$ the compact set $P\setminus\bigcup_{v\in V\left(
P\right)  }B\left(  v,\varepsilon\right)  $. From Lemma \ref{L_ap}, there is a
constant $K_{\varepsilon}$ depending only on $\varepsilon$ such that $D$ is
$K_{\varepsilon}$-Lipschitz continuous on $P^{\varepsilon}\cap\mathrm{Im}%
\left(  G\right)  $. Since $\mathrm{Im}\left(  G\right)  $ is dense, $D$
admits a unique $K_{\varepsilon}$-Lipschitz continuous extension
to $P_{\varepsilon}$ for any $\varepsilon$, and consequently
an unique continuous extension to $P^{\ast}$. This extension is obviously
locally Lipschitz continuous.
\end{proof}

\begin{lem}
\label{L_Gb2P}Let $p$, $q$ be two (possibly coinciding) points on $P$. Then,
the set $C\subset\Sigma_{p}$ of those $u$ such that $\gamma_{u}$ meets $q$ is
at most countable.
\end{lem}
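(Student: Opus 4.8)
The plan is to encode each geodesic $\gamma_u$, $u\in C$, by a finite combinatorial datum, to observe that only countably many such data can occur, and that distinct directions give distinct data. First I would reduce to a statement about geodesic segments: for $u\in C$ let $t_0(u)$ be the smallest positive $t$ with $\gamma_u(t)=q$ (it exists, since $\gamma_u$ is continuous and, being arc-length parametrized, cannot return to $p$ in arbitrarily small time), and set $\sigma_u:=\gamma_u|_{[0,t_0(u)]}$. This is a strict geodesic segment from $p$ to $q$, and the map $u\mapsto\sigma_u$ is injective because $\sigma_u'(0)=u$. So it suffices to show that the set $\mathcal S$ of all strict geodesic segments from $p$ to $q$ is at most countable.

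Fix $\sigma\in\mathcal S$. Its image is a compact subset of the open flat manifold $P^{\ast}=P\setminus V(P)$, hence $\mathrm{Im}(\sigma)\subset P\setminus\bigcup_{v\in V(P)}B(v,\varepsilon)$ for some $\varepsilon>0$. I claim that $\sigma$ crosses only finitely many edges. Between two consecutive edge crossings, $\sigma$ runs across a single closed triangular face, entering through one of its edges and leaving through another; these two edges share a vertex $v$, and since $\sigma$ stays at distance at least $\varepsilon$ from $v$, this portion of $\sigma$ has length bounded below by a positive constant $\rho(\varepsilon)$ depending only on $\varepsilon$ and the finitely many faces of $P$ (a chord of a fixed triangle that cuts off a vertex $v$ while staying disjoint from $B(v,\varepsilon)$ cannot be arbitrarily short). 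As $\sigma$ has finite length, it therefore meets only finitely many edges.

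Consequently $\sigma$ determines a finite sequence: the faces it successively visits and the edges it successively crosses (and, if $p$ or $q$ lies on an edge, the first and last faces entered). There are only countably many such finite sequences, and once such a sequence is prescribed, unfolding its faces one after another into the Euclidean plane along the successive edges turns $\sigma$ into a straight segment joining a fixed copy of $p$ to a fixed copy of $q$ and crossing the developed edges in the prescribed order; since a straight segment between two given points of the plane is unique, each sequence is realized by at most one element of $\mathcal S$. Hence $\mathcal S$ is at most countable, and therefore so is $C$. The only point requiring genuine argument is the finiteness of the number of edges met by a strict geodesic segment, which is exactly where the compactness of $\mathrm{Im}(\sigma)$ inside $P^{\ast}$ enters; the degenerate situations (an endpoint lying on an edge or at a vertex, or $\sigma$ running for a while along an edge) are harmless once they are absorbed into the combinatorial datum.
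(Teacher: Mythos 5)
Your route is genuinely different from the paper's. The paper argues by contradiction: assuming infinitely many directions whose geodesics reach $q$ within time $a$, it extracts (Ascoli) a convergent subsequence and applies Gauss--Bonnet to the thin geodesic digons bounded by consecutive members, whose angle sums must simultaneously tend to $0$ and lie in the finite set of subsums of vertex curvatures. You instead encode each strict geodesic segment from $p$ to $q$ by its combinatorial itinerary (the sequence of faces and edges met) and use uniqueness of the straight development; the reduction to segments, the injectivity $u\mapsto\sigma_u$, and the step ``one itinerary determines the development, and a straight segment between the developed copies of $p$ and $q$ is unique, hence at most one element of $\mathcal{S}$ per itinerary'' are all fine, and this approach is in some ways more informative (an explicit injection into finite words over a finite alphabet).

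There is, however, a concrete gap in your finiteness claim. Your lower bound $\rho(\varepsilon)$ for a face-crossing chord assumes that the vertex $v$ shared by the two crossed edges is a conical point, so that $\sigma$ avoids $B(v,\varepsilon)$. But in this paper a vertex of $P$ is by definition a conical point, while the triangulation realizing $P$ will in general also have flat vertices (total angle $2\pi$) --- unavoidably so, e.g., for a flat torus, where $V(P)=\emptyset$ and your $\varepsilon$-separation yields no bound whatsoever. A strict geodesic may pass through, or arbitrarily close to, such a flat triangulation vertex, and there consecutive edge crossings can be arbitrarily short, so the length-based count fails as stated. The claim itself is true and repairable: crossing points of $\sigma$ with the $1$-skeleton cannot accumulate, because an accumulation point would lie on the compact image of $\sigma$, hence either at a flat point (interior of a face, interior of an edge, or a flat triangulation vertex), where a neighborhood develops isometrically onto a planar disk in which $\sigma$ is a straight chord meeting each of the finitely many local edges at most once, or --- only if $q$ itself is a conical point reached as the endpoint, the case needed for Corollary \ref{C_SD} --- at $q$, near which the terminal subarc of $\sigma$ is radial and again meets each incident edge at most once; conical points other than $q$ are excluded by your $\varepsilon$-separation. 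With this patch (and with the ``degenerate'' itineraries through flat vertices or along edges bookkept explicitly, which keeps the alphabet finite), your argument is complete.
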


\begin{proof}
It is sufficient to prove that, for any $a\in\mathbb{\mathbb{N}}%
^{\mathbb{\mathbb{\ast}}}$, the set of directions $u$ such that $\gamma
_{u}|[0,a]$ meets $q$ is finite. Assume on the contrary that there are
infinitely many distinct $u_{n}\in\Sigma_{x}$ such that $\gamma_{u_{n}}$ meets
$q$ at time $t_{n}\leq a$. By the Ascoli theorem, one can extract from $\left\{
\gamma^{n}\overset{\mathrm{def}}{=}\gamma_{u_{n}}|[0,t_{n}]\right\}_{n}$ a
subsequence (still denoted by $\gamma^{n}$) converging to a curve $\gamma$
from $p$ to $q$. Let $\tau_{n}$ be the supremum of those $t>0$ such that
$\gamma^{n}|]0,t]$ does not intersect $\operatorname{Im}\left(  \gamma
^{n+1}\right)  $, and $\tau_{n}^{\prime}$ be such that $\gamma^{n+1}\left(
\tau_{n}^{\prime}\right)  =\gamma\left(  \tau_{n}\right)  \overset
{\mathrm{def}}{=}p_{n}$. It is clear that $\tau_{n}$ and $\tau_{n}^{\prime}$
are bounded by $a$, and on the other hand, cannot approach $0$. Hence, the
geodesic digon $D_{n}\overset{\mathrm{def}}{=}\operatorname{Im}\left(
\gamma^{n}|[0,\tau_{n}]\right)  \cup\operatorname{Im}\left( \gamma
^{n+1}|[0,\tau_{n+1}]\right)$ tends to a geodesic arc of $\gamma$. It follows that
the sum $s_{n}$ of the angles of $D_{n}$ must tend to $0$. By the Gauss-Bonnet
formula, this sum equals the sum of the curvatures of the vertices included
inside the digon, and consequently $s_{n}$ may take only finitely many
distinct value. Hence $s_{n}=0$ for $n$ large enough. Since $u_{n}$ and
$u_{n+1}$ were supposed to be distinct, we get a contradiction.
\end{proof}

\begin{corollary}
\label{C_SD}For any $p\in P^{\ast}$, the set $S_{p}\subset\Sigma_{p}$ of
singular directions is at most countable.
\end{corollary}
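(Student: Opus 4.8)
The plan is to obtain the corollary immediately from Lemma \ref{L_Gb2P}, exploiting that $P$ has only finitely many vertices. The first step is to unwind the definition of a singular direction: a direction $u\in\Sigma_{p}$ is declared singular exactly when the maximal strict geodesic $\gamma_{u}$ is not defined on all of $[0,\infty)$, and, since $P$ is compact, the only way $\gamma_{u}$ can cease to exist in positive time is by running into a vertex. Hence a direction $u$ is singular if and only if $\gamma_{u}$ meets some $v\in V(P)$, so that $S_{p}=\bigcup_{v\in V(P)}C_{v}$ with $C_{v}=\{\,u\in\Sigma_{p}:\gamma_{u}\text{ meets }v\,\}$.

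The second step is to apply Lemma \ref{L_Gb2P} with $q=v$, for each of the finitely many vertices $v$; this yields that every $C_{v}$ is at most countable. A finite union of at most countable sets being at most countable, we conclude that $S_{p}$ is at most countable.

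I do not expect any genuine obstacle here: the entire content is carried by Lemma \ref{L_Gb2P}. The only point deserving a word is the reduction ``$u$ singular $\iff$ $\gamma_{u}$ meets a vertex'', which is already recorded in the discussion preceding the statement and uses nothing beyond compactness of $P$ (a strict geodesic on a compact surface cannot stop existing for any other reason). Everything else is the trivial observation that $V(P)$ is finite.
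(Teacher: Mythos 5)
Your proof is correct and matches the paper's intended argument: the corollary is stated without proof precisely because it follows from Lemma \ref{L_Gb2P} applied with $q$ equal to each of the finitely many vertices, plus the (essentially definitional) observation that $u$ is singular exactly when $\gamma_{u}$ reaches a vertex in positive time. Nothing further is needed.
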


\begin{lem}
\label{L_WDGT}Let $\sigma$ be a segment on $P$, $p$ a point of $\sigma$ and
$u\in\Sigma_{p}$ transverse to $\sigma$. For $y\in\sigma$, denote by $u_{y}\in\Sigma_y$
the parallel transport of $u$ along $\sigma$. Then, the set of those points
$y\in\sigma$ such that $u_{y}$ is singular is at most countable.
\end{lem}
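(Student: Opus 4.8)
The plan is to reduce the statement to Lemma \ref{L_Gb2P} by a covering argument. First I would observe that a direction $u_y$ is singular precisely when the maximal strict geodesic $\gamma_{u_y}$ runs into some vertex after finite time; so the set in question is $\bigcup_{v\in V(P)}\{y\in\sigma : \gamma_{u_y}\text{ meets }v\}$, a finite union over the vertex set $V(P)$. It therefore suffices to fix a vertex $v$ and show that $\{y\in\sigma : \gamma_{u_y}\text{ meets }v\}$ is at most countable. Since a geodesic that meets $v$ does so at a time bounded by the length it has travelled, and $\sigma$ is compact, it further suffices (as in the proof of Lemma \ref{L_Gb2P}) to bound, for each $a\in\mathbb{N}^{\ast}$, the number of $y\in\sigma$ for which $\gamma_{u_y}|[0,a]$ meets $v$; I would show this number is finite, which gives the countable bound upon taking the union over $a$.

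Next I would set up the finiteness argument for fixed $v$ and fixed $a$. Suppose, for contradiction, there are infinitely many distinct $y_n\in\sigma$ with $\gamma_{u_{y_n}}(t_n)=v$ for some $t_n\le a$. By compactness of $\sigma$ we may assume $y_n\to y_\infty\in\sigma$, and by the Ascoli theorem (exactly as in Lemma \ref{L_Gb2P}) we may assume the arcs $\gamma^n:=\gamma_{u_{y_n}}|[0,t_n]$ converge to a geodesic arc $\gamma$ from a point of $\sigma$ to $v$. The key point is that consecutive arcs $\gamma^n$ and $\gamma^{n+1}$ start at distinct points $y_n\ne y_{n+1}$ of the segment $\sigma$ but with \emph{parallel} initial directions (they are parallel transports of the same $u$ along $\sigma$), and both terminate at $v$; so together with the subarc of $\sigma$ between $y_n$ and $y_{n+1}$ they bound a geodesic triangle $T_n$ (two geodesic sides meeting at $v$, one side along $\sigma$). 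As $n\to\infty$ these triangles degenerate onto the arc $\gamma$, so the total turning in $T_n$ tends to a fixed value determined by the limit configuration. By Gauss--Bonnet the sum of the angles of $T_n$, minus $\pi$, equals the sum of curvatures of vertices enclosed by $T_n$ (plus the contribution at any negatively curved vertices the geodesics pass through, handled as in Lemma \ref{L_Gb2P}); this is a quantity taking only finitely many values. Hence it is eventually constant, which pins down the geometry of $T_n$ so tightly that two consecutive arcs $\gamma^n,\gamma^{n+1}$ must in fact coincide as subsets and start at the same point of $\sigma$ — contradicting $y_n\ne y_{n+1}$.

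I expect the main obstacle to be making precise the degeneration of the triangles $T_n$ and extracting the contradiction from the quantized Gauss--Bonnet defect: one must verify that the side of $T_n$ lying along $\sigma$ shrinks to a point (which follows from $y_n\to y_\infty$), that the two geodesic sides stay genuinely distinct away from $v$ long enough to enclose a nondegenerate region, and that when the enclosed curvature is forced to be $0$ the parallel-transport relation between the initial directions together with the flatness of the enclosed region forces $y_n=y_{n+1}$. An alternative, possibly cleaner, route is to bypass the triangle picture: on each small flat piece of $\sigma$ develop the situation into the plane, so that the family $\{\gamma_{u_y}\}$ becomes a family of parallel rays, note that such a family hits a fixed point $v$ for at most one parameter value $y$ per flat unfolding, and then use Lemma \ref{L_ap}-type estimates to propagate this across the finitely many flat pieces of $\sigma$ and the finitely many unfoldings needed to follow $\gamma_{u_y}$ for time $\le a$; summing these finitely many ``at most one'' contributions again yields finiteness. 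Either way the conclusion is that for each $a$ only finitely many $y$ work, so the full set is a countable union of finite sets, hence at most countable.
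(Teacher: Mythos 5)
Your plan follows the paper's proof in all essentials: the same reduction to showing finiteness, for each $a\in\mathbb{N}^{\ast}$ (and, after extracting a subsequence, a fixed vertex), of the set of $y$ with $\gamma_{u_y}|[0,a]$ hitting that vertex; the same compactness/Ascoli extraction; and the same endgame, namely that the Gauss--Bonnet defect of a region bounded by two consecutive arcs and a subarc of $\sigma$ is quantized (a sum of vertex curvatures) while the relevant angle degenerates to $0$ as the arcs converge, the parallelism of the initial directions along $\sigma$ being what makes the two base angles sum to $\pi$ so that the defect is exactly the apex angle.

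The one substantive difference is where you place the apex of the triangle $T_n$. You take it at the vertex $v$ itself, whereas the paper takes it at the \emph{first} intersection point $q_n$ of $\gamma^{n}$ and $\gamma^{n+1}$ along $\gamma^{n}$. This is not cosmetic: with apex at $v$ the closed curve formed by the two arcs and the subsegment of $\sigma$ need not be simple (consecutive arcs, although they start with parallel directions, can cross before reaching $v$ after passing on opposite sides of an intermediate vertex), and then Gauss--Bonnet in the form ``angle sum minus $\pi$ equals enclosed curvature'' does not apply to $T_n$ as drawn. You flag this obstacle (``the two geodesic sides stay genuinely distinct away from $v$'') but do not resolve it; the paper's choice of $q_n$ is precisely the resolution, since up to the first intersection the two arcs together with the short subarc of $\sigma$ bound a genuine triangle, and the angle at $q_n$ still tends to $0$ by the convergence $\gamma^n\to\gamma$. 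So I would regard your proposal as the paper's argument with this one technical device missing; the fix is exactly the $\tau_n$/first-intersection trick you cite from the proof of Lemma \ref{L_Gb2P}. Your alternative ``development into the plane'' route is vaguer as stated (there is no uniform bound on the number of unfoldings needed for time $a$, since geodesics may cross faces in arbitrarily short arcs near vertices), so the triangle argument, amended as above, is the one to carry out.
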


\begin{proof}
The proof is similar to the one of Lemma \ref{L_Gb2P}. It is sufficient to
prove that, for any $a\in\mathbb{\mathbb{N}}^{\mathbb{\mathbb{\ast}}}$, the
set of points $y$ such that $\gamma_{u_{y}}$ is not well defined on $\left[
0,a\right]$ is finite. Assume on the contrary that there are infinitely many
$y_{n}\in\sigma$ such that $\gamma_{u_{y_{n}}}$ meets a vertex at time
$t_{n}\leq a$. Possibly passing to a subsequence, we can assume that it is the same
vertex for all those points. Put $\gamma^{n}\overset{\mathrm{def}}{=}%
\gamma_{u_{y_{n}}}|[0,t_{n}]$. By extracting a subsequence, one can assume
that $\gamma^{n}$ is converging to a curve $\gamma$. Let $q_{n}$ be the first
intersection point of $\gamma^{n}$ and $\gamma^{n+1}$ along $\gamma^{n}$. Let
$T_{n}$ be the triangle $T_{n}$ whose vertices are $y_{n}$, $y_{n+1}$ and
$q_{n}$, and the sides are parts of $\gamma^{n}$, $\gamma^{n+1}$ and $\sigma$.
By the Gauss-Bonnet theorem, the angle of $T_{n}$ at $q_{n}$ equals the sum of the
curvatures of the vertices included in $T_{n}$, and so, it can take only finitely
many distinct values. On the other hand, since $\gamma^{n}$ tends to $\gamma$,
this angle should tend to zero. Hence, it must vanish for large $n$, in
contradiction with the fact that $\gamma^{n}$ and $\gamma^{n+1}$ are distinct.
\end{proof}


\section{Parallel polyhedra}

In this section we introduce the notion of parallel polyhedra, and give their basic properties.

If the holonomy group of $P^{\ast}$ is either trivial, or equal to $\left\{
id,-id\right\}$, $P$ is said to be \emph{parallel}. In this case, the
parallel transport of lines does not depends on the path. In other words, there
exist natural bijections $\tau_{p}^{q}:\Delta_{p}\rightarrow\Delta_{q}$
such that, for any path $\gamma$ from $p$ to $q$ and any tangent line
$l\in\Delta_{p}$, the parallel transport of $l$ along $\gamma$ is $\tau
_{p}^{q}\left( l\right)$. It follows that there is a well defined notion a
line direction that does not depends on the point $p\in P^{\ast}$. We denote
by $\tilde{\Delta}P^{\ast}$ the set of line directions on $P^{\ast}$.

Examples of parallel polyhedra are given after Proposition \ref{p_PLD2P}.

In order to give a first characterization of parallel polyhedra, we need the
following lemma.

\begin{lem}
\label{L_2PLD2O}
If $P$ admits two parallel line distributions that are not
orthogonal at some point, then it is orientable.
\end{lem}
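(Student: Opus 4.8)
The plan is to derive orientability from the holonomy constraint. Since $P$ admits a parallel line distribution, the holonomy group of $P^{\ast}$ acting on the projective line $\Delta_p$ is trivial; but for orientability we need to control holonomy on $\Sigma_p$ itself, which a priori could be $\{\pm\mathrm{id}\}$ rather than trivial. The key observation is that the holonomy along any loop $\gamma$ based at $p$, being an isometry of the circle $\Sigma_p$ fixing each of two non-orthogonal line directions, must be the identity: an orientation-reversing isometry of the circle (a reflection) fixes exactly one line direction (its axis) and sends every other line direction to a distinct one, so it cannot fix two non-orthogonal (indeed, any two distinct) line directions. Hence the holonomy group of $P^{\ast}$ is trivial.

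First I would set up the reduction: fix a base point $p\in P^{\ast}$ and recall that the holonomy of the flat metric sends $\Sigma_p$ to $\Sigma_p$ by a rotation or a reflection (an element of $O(2)$), since parallel transport along a loop is a linear isometry of the tangent plane. Next I would use the two parallel line distributions: by definition each one is preserved by parallel transport along \emph{every} path, so the induced maps on $\Delta_p$ fix both corresponding points of $\Delta_p$. If the holonomy element $h$ on $\Sigma_p$ were a reflection, its action on $\Delta_p$ would be an involution fixing exactly the point corresponding to its axis and the point orthogonal to it — wait, more carefully: a reflection of the circle descends to an involution of $\Delta_p$ fixing the axis direction and the perpendicular direction only. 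Since our two line directions are \emph{not orthogonal}, they cannot both be among these two fixed points, so $h$ cannot be a reflection. Therefore $h$ is a rotation; and a rotation fixing two distinct points of $\Delta_p$ must be rotation by $0$ or $\pi$, i.e. $h\in\{\mathrm{id},-\mathrm{id}\}$. Actually I only need that $h$ is not orientation-reversing: the holonomy group is then contained in $SO(2)$, so parallel transport preserves orientation of tangent planes, which is exactly the statement that $P^{\ast}$ is orientable; and since $P$ is obtained from $P^{\ast}$ by adding finitely many cone points, $P$ is orientable as well.

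The main obstacle is being careful about the descent from $\Sigma_p$ to $\Delta_p$ and making sure the ``not orthogonal at some point'' hypothesis is used at the right place — it is precisely what rules out the reflection whose two fixed line-directions are an orthogonal pair. A minor point to address cleanly is why orientability of $P^{\ast}$ (a surface with finitely many punctures) implies orientability of the closed surface $P$: filling in a cone point is a local operation that does not affect orientability, so this is immediate. I do not expect any genuine difficulty beyond this bookkeeping; the heart of the argument is the elementary fact that a reflection of a circle fixes only one line direction other than the one perpendicular to its axis, so it cannot fix two non-perpendicular line directions.
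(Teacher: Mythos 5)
Your proposal is correct and follows essentially the same route as the paper: assuming non-orientable holonomy, the offending holonomy element is a reflection of $\Sigma_{p}$, it must fix both (parallel, hence holonomy-invariant) line fields at $p$, but its induced action on $\Delta_{p}$ fixes only its axis and the perpendicular line, contradicting non-orthogonality. One small caveat: your parenthetical claim that a reflection cannot fix \emph{any} two distinct line directions is false (it fixes exactly the orthogonal pair), and --- exactly as in the paper's own proof --- the argument tacitly requires the two distributions to differ at the chosen point; your later, corrected formulation via the two fixed points of the involution on $\Delta_{p}$ is the right one.
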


\begin{proof}
Let $D$ and $D^{\prime}$ be such distributions and assume that $P$ is not
orientable. So there exists a loop $\gamma:[a,b]\rightarrow P^{\ast}$ such that
$||_{\gamma}$ is a reflection. Let $\tilde{\gamma}:[a,b]\rightarrow P^{\ast}$
be a path homotopic to $\gamma$ in $P^{\ast}$, consisting in finitely many
segments which are either parallel or normal to $D$. Since $P^{\ast}$ is flat,
$||_{\tilde{\gamma}}=||_{\gamma}$, whence $||_{\gamma}D_{\gamma\left(
0\right)  }=D_{\gamma\left(  0\right)  }$. The same holds for $D^{\prime}$,
whence $D$ and $D^{\prime}$ are either equal or orthogonal.
\end{proof}

\begin{proposition}
\label{p_PLD2P}
The following statements are equivalent.
\begin{enumerate}
\item $P$ is parallel.

\item $P$ is orientable and admits a parallel line distribution.

\item $P$ admits two parallel line distributions that are not orthogonal at some point.
\end{enumerate}
\end{proposition}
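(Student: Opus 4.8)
The plan is to prove the equivalence in a cycle $(1)\Rightarrow(2)\Rightarrow(3)\Rightarrow(1)$, since none of the three steps is hard once the right objects are produced.

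For $(1)\Rightarrow(2)$, assume $P$ is parallel. Since the holonomy group is contained in $\{\pm\,\mathrm{id}\}$, orientation-reversing loops would need a reflection in the holonomy, which is impossible; hence $P^{\ast}$ is orientable, and so is $P$. To build a parallel line distribution, fix a point $p_{0}\in P^{\ast}$ and a line $l_{0}\in\Delta_{p_{0}}$, and set $D_{p}:=\tau_{p_{0}}^{p}(l_{0})$ using the path-independent transport $\tau$ provided by the definition of parallel. This is well defined by path-independence, and it is locally a constant line field in any unfolded simply connected chart (so its integral curves are locally parallel straight segments), which is exactly the definition of a parallel line distribution; the holonomy being $\{\pm\,\mathrm{id}\}$ guarantees that $D$ closes up globally as a section of $\Delta P^{\ast}$.

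For $(2)\Rightarrow(3)$, suppose $P$ is orientable and $D$ is a parallel line distribution. On an orientable flat surface with trivial-or-$\{\pm\,\mathrm{id}\}$-valued holonomy the rotation by $\pi/2$ is a globally well-defined, parallel endomorphism of the tangent spaces (orientability makes $90^{\circ}$ rotation well defined; flatness makes it parallel). Applying it pointwise to $D$ yields a second line distribution $D^{\perp}$ which is again parallel (its integral curves are the perpendicular straight segments in each unfolding) and which is orthogonal to $D$ \emph{everywhere}. That is almost what we want but with the wrong adjective, so instead I would simply note that any parallel line distribution $D$ together with itself is a pair that is \emph{not} orthogonal at any point; taking $D'=D$ gives two parallel line distributions (trivially the ``two'' may coincide) that are non-orthogonal somewhere. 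If the intended reading of (3) is that the two distributions be distinct, then use $D$ and a distribution making a fixed angle $\theta\in(0,\pi/2)$ with $D$, obtained by rotating $D$ by $\theta$ in every unfolded chart; orientability makes this rotation globally consistent, parallelism of $D$ makes the result parallel, and the two are distinct and nowhere orthogonal.

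For $(3)\Rightarrow(1)$, assume $P$ carries parallel line distributions $D,D'$ non-orthogonal at some point. By Lemma \ref{L_2PLD2O}, $P$ is orientable; so there are no orientation-reversing loops and the holonomy group consists of rotations. Let $\gamma$ be any loop in $P^{\ast}$; replacing it by a homotopic concatenation of segments parallel or normal to $D$ (possible as in the proof of Lemma \ref{L_2PLD2O}) and using flatness, $||_{\gamma}$ fixes the line $D_{\gamma(0)}$, and likewise it fixes $D'_{\gamma(0)}$. A rotation of the plane fixing two distinct lines must be $\mathrm{id}$ or rotation by $\pi$, i.e. $\pm\,\mathrm{id}$; since $D$ and $D'$ are non-orthogonal at the base point (and holonomy is independent of base point within a path-component, which is all of $P^{\ast}$), the two lines $D_{\gamma(0)}, D'_{\gamma(0)}$ are distinct, so $||_{\gamma}\in\{\pm\,\mathrm{id}\}$. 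Hence the holonomy group is contained in $\{\pm\,\mathrm{id}\}$ and $P$ is parallel.

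I expect the only genuine subtlety to be the precise reading of statement (3) and making sure the ``two'' parallel line distributions are produced honestly in $(2)\Rightarrow(3)$ — the rest is bookkeeping with flatness, orientability, and the segment-homotopy trick already used for Lemma \ref{L_2PLD2O}. The globalization in $(1)\Rightarrow(2)$ (that the locally defined constant line field glues to a section of $\Delta P^{\ast}$ rather than only of $\Sigma P^{\ast}$) is where the hypothesis ``holonomy $\subseteq\{\pm\,\mathrm{id}\}$'' rather than ``holonomy trivial'' is essential, and that is the point I would state most carefully.
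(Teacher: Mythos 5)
Your proposal is correct and follows essentially the paper's route: the same construction $D_q=\tau_p^q(l)$ for (1)$\Rightarrow$(2), the same reliance on Lemma \ref{L_2PLD2O} to extract orientability from (3), and your ``holonomy fixes the line $D_{\gamma(0)}$, and an orientation-preserving holonomy fixing a line is $\pm\mathrm{id}$'' is just the paper's (2)$\Rightarrow$(1) argument (constancy of the angle between $D$ and a transported direction, with orientability excluding the reflections) phrased in terms of transporting the distribution itself. One harmless slip in (3)$\Rightarrow$(1): non-orthogonality does not make $D_{\gamma(0)}$ and $D'_{\gamma(0)}$ distinct (they could coincide, and the hypothesis is only at \emph{some} point, though in fact the angle between two parallel distributions is constant), but this does not matter since a rotation fixing even a single line is already $\pm\mathrm{id}$; $D'$ is needed only, via Lemma \ref{L_2PLD2O}, to guarantee orientability.
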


\begin{proof}
Assume that $P$ is parallel. If $P$ was not orientable, then its holonomy
group would contain a reflection, in contradiction with the definition of
parallel polyhedra. Choose $p\in P^{\ast}$ and $l\in\Delta_{p}P^{\ast}$. The
line distribution is given by $D_{q}=\tau_{p}^{q}\left(  l\right)  $.

Conversely, assume that $P$ is orientable and let $D$ be a parallel line distribution.
Choose a piecewise smooth loop $\gamma:[0,1]\rightarrow P^{\ast}$ with
basepoint $p\overset{\mathrm{def}}{=}\gamma\left(  0\right)  =\gamma\left(
1\right)  $, and $u\in\Sigma_{p}$. Denote by $\tau_{t}:\Sigma_{p}%
\rightarrow\Sigma_{\gamma\left(  t\right)  }$ the parallel transport along
$\gamma|[0,t]$. Since $D$ is parallel, the (mod $\pi$) angle $\measuredangle\left(
D_{\gamma\left(  t\right) },\pm\tau_{t}\left( u\right) \right)$ is
constant with respect to $t$, whence $\tau^{1}\left(  u\right)  \in\left\{
\pm u,\pm S_{D_{p}}\left(  u\right)  \right\}  $, where $S_{l}$ stands for the
reflection with respect to $l$. Now, since $P$ is orientable, $\tau^{1}=\pm
id$. Lemma \ref{L_2PLD2O} ends the proof.
\end{proof}

\begin{example}
\label{ExIT}Any isosceles tetrahedron is parallel. Figure \ref{F1} shows an
unfolding and a parallel line distribution. By Proposition \ref{P_kpi} below,
(possibly degenerated) isosceles tetrahedra are the only parallel convex polyhedra.
\end{example}

\begin{figure}[ptb]
\begin{center}
\includegraphics[width=.3\textwidth]{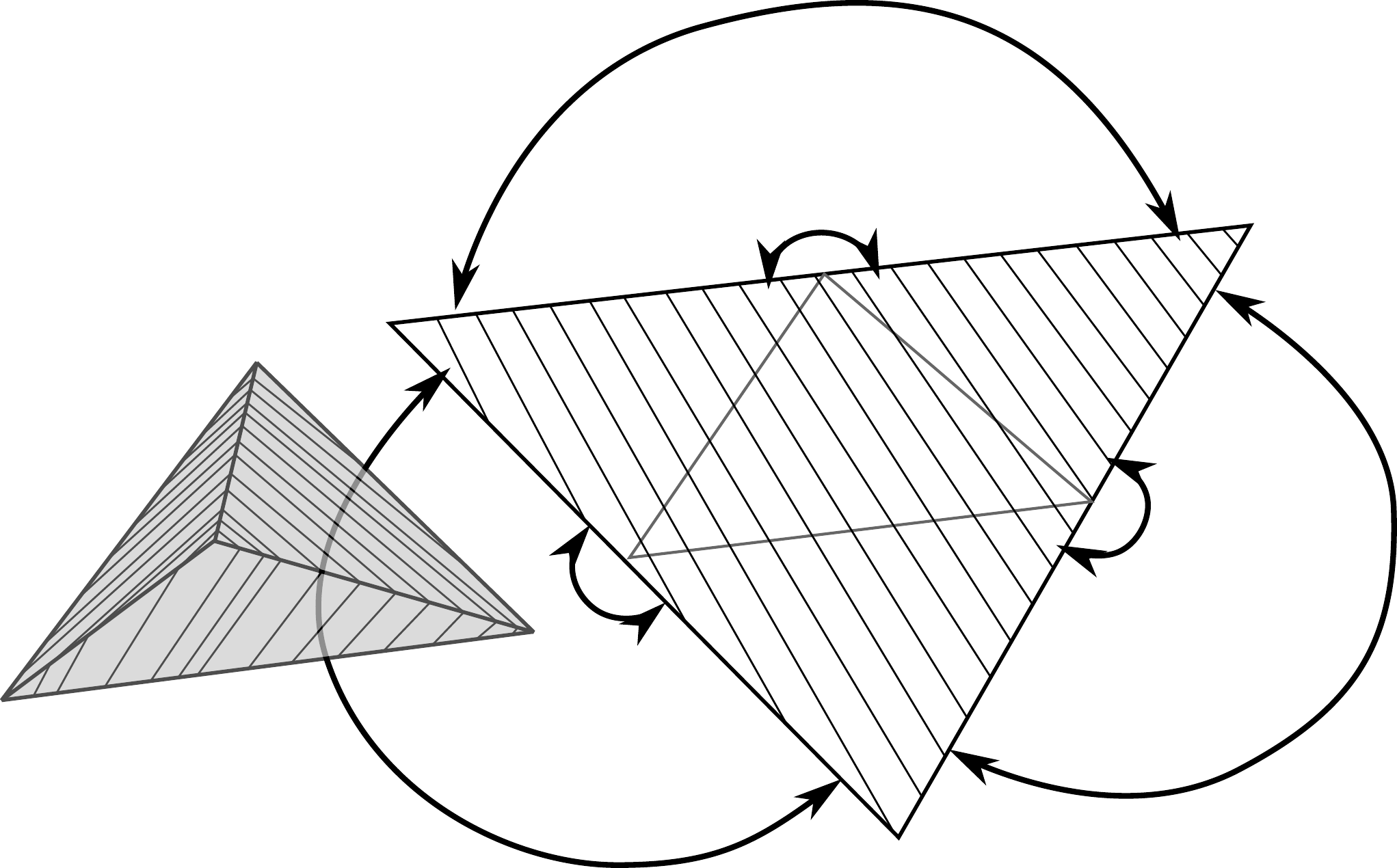}
\end{center}
\caption{Isoceles tetrahedra with a parallel line distribution.}%
\label{F1}%
\end{figure}

\begin{example}
\label{ExFT}Any flat torus is clearly a parallel polyhedron.
\end{example}

\begin{example}
\label{ExRD}Consider a polygonal domain $\Delta$ in $\mathbb{R}^{2}$ such that each
segment of its boundary is parallel either to the $x$-axis or the $y$-axis.
Glue two copies of $\Delta$ alomg their boundaries, by identifying the corresponding points.
Then the obtained polyhedron (called the double of $\Delta$) is parallel.
\end{example}

\begin{proposition}
\label{P_kpi}
If $P$ is parallel, the curvature of any vertex belongs to $\mathbb{Z}\pi$.
\end{proposition}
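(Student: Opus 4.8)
The plan is to compute the holonomy of $P^{\ast}$ around a small loop enclosing a single vertex, and to compare it with the hypothesis that the holonomy group of $P^{\ast}$ is contained in $\{id,-id\}$.

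I would fix a vertex $v$ and choose $\varepsilon>0$ small enough that $\overline{B(v,\varepsilon)}$ contains no vertex other than $v$; then $B(v,\varepsilon)$ is isometric to a neighbourhood of the apex of a Euclidean cone of total angle $2\pi-\omega(v)$. Let $\gamma$ be the loop running once, inside $P^{\ast}$, along the metric circle of radius $\varepsilon/2$ centred at $v$. The main step is the classical identity that $||_{\gamma}$ is the rotation of angle $\omega(v)$ (up to the sign fixed by the orientation of $\gamma$). I would prove it by cutting the cone $B(v,\varepsilon)$ along a ray issuing from $v$ and unfolding it isometrically onto a planar sector of angle $2\pi-\omega(v)$: parallel transport inside the flat, simply connected unfolded sector is trivial, while the length-preserving identification of its two bounding rays is precisely the rotation of angle $2\pi-(2\pi-\omega(v))=\omega(v)$. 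Equivalently, this is the Gauss--Bonnet relation between the holonomy along a loop that is contractible in $P$ and the total curvature it encloses, which here equals $\omega(v)$.

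Since $P$ is parallel, $||_{\gamma}\in\{id,-id\}$; a planar rotation equal to $id$ has angle in $2\pi\mathbb{Z}$, and one equal to $-id$ has angle in $\pi+2\pi\mathbb{Z}$, so in either case $\omega(v)\in\mathbb{Z}\pi$, as claimed. The only point that requires genuine care is the identification of $||_{\gamma}$ with the rotation of angle $\omega(v)$, and this is classical; everything else is immediate. Alternatively, one can bypass holonomy altogether by using Proposition~\ref{p_PLD2P}: $P$ carries a parallel line distribution $D$, and since the defining property of ``parallel'' is local, parallel transport along any path preserves $D$; hence $||_{\gamma}$ fixes the line $D_{\gamma(0)}$, which already forces its rotation angle $\omega(v)$ to lie in $\mathbb{Z}\pi$.
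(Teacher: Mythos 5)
Your proof is correct and follows essentially the same route as the paper: both compute the holonomy around a small loop enclosing the vertex as a rotation of angle $\omega(v)$ modulo $2\pi$ (the paper via angle sums and Gauss--Bonnet along a polygonal Jordan curve, you via unfolding the cone), and then conclude from $||_{\gamma}\in\{id,-id\}$ that $\omega(v)\in\mathbb{Z}\pi$. The only nitpick is the phrasing that the edge identification is ``the rotation of angle $2\pi-(2\pi-\omega(v))$'': the gluing rotation has angle equal to the cone angle $2\pi-\omega(v)$, and it is the resulting holonomy that equals $\omega(v)$ modulo $2\pi$ -- but either reading gives the same conclusion.
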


\begin{proof}
Let $\gamma:[0,1]\rightarrow P$ be a Jordan polygonal curve enclosing a vertex $v$. 
Let $s$ be the sum of its $n$ angles, measured toward the domain containing $v$. 
On one hand, $\measuredangle\left( \dot{\gamma}\left( 0\right) ,||_{\gamma}\dot{\gamma}\left( 0\right) \right) =n\pi-s$, 
and on the other hand, by the Gauss-Bonnet theorem, $\omega\left( v\right)=s-(n-2)\pi$. 
Since $P$ is parallel, $\measuredangle\left( \dot{\gamma}\left( 0\right) ,||_{\gamma}\dot{\gamma}\left( 0\right) \right)$
equals $0$ or $\pi$, and the conclusion follows.
\end{proof}

There is no converse of Proposition \ref{P_kpi} (see Example \ref{ExNP} below), except in the case of
polyhedra homeomorphic to the sphere.

\begin{proposition}
If $P$ is homeomorphic to the sphere and all vertices have curvature in $\mathbb{Z}\pi$ then $P$ is parallel.
\end{proposition}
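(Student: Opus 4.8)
The plan is to reduce to Proposition \ref{p_PLD2P} by showing that the holonomy group of $P^{\ast}$ is contained in $\left\{ id,-id\right\}$. Orientability is free of charge: $P$ is homeomorphic to the sphere, hence orientable, so every holonomy transformation of $P^{\ast}$ is an element of $SO(2)$, \ie a rotation. Thus it suffices to check that each holonomy rotation is by a multiple of $\pi$, which (being central in $SO(2)$) makes the statement independent of base point and of conjugation.

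First I would pin down generators of $\pi_{1}\left( P^{\ast}\right)$. Writing $V\left( P\right) =\left\{ v_{1},\dots,v_{k}\right\}$, the surface $P^{\ast}=P\setminus V\left( P\right)$ is a $k$-punctured sphere; by standard topology $\pi_{1}\left( P^{\ast}\right)$ is generated by the homotopy classes of small loops $\gamma_{1},\dots,\gamma_{k}$, where each $\gamma_{i}$ encircles $v_{i}$ and no other vertex (subject to the single relation $\gamma_{1}\cdots\gamma_{k}=1$, which we do not even need). Consequently the holonomy group of $P^{\ast}$ is generated by the rotations $||_{\gamma_{1}},\dots,||_{\gamma_{k}}$. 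Note $k\ge 1$ since a genuinely flat sphere is forbidden by Gauss--Bonnet, but this is inessential.

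Next I would compute $||_{\gamma_{i}}$, reusing the calculation already carried out in the proof of Proposition \ref{P_kpi}: taking $\gamma_{i}$ to be a Jordan polygonal curve enclosing $v_{i}$ and nothing else, one has $\measuredangle\left( \dot\gamma_{i}\left( 0\right),||_{\gamma_{i}}\dot\gamma_{i}\left( 0\right)\right) = n\pi - s$ where $s$ is the sum of the $n$ angles toward $v_{i}$, while $\omega\left( v_{i}\right) = s-(n-2)\pi$; hence the rotation angle of $||_{\gamma_{i}}$ is $2\pi - \omega\left( v_{i}\right) \equiv -\omega\left( v_{i}\right) \pmod{2\pi}$ (the sign depending on the chosen orientation of the loop). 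Since $\omega\left( v_{i}\right)\in\mathbb{Z}\pi$ by hypothesis, $||_{\gamma_{i}}$ is a rotation by an integer multiple of $\pi$, \ie $||_{\gamma_{i}}\in\left\{ id,-id\right\}$. As $\left\{ id,-id\right\}$ is a subgroup of $SO(2)$, the holonomy group — generated by the $||_{\gamma_{i}}$ — is contained in $\left\{ id,-id\right\}$, so it is trivial or equal to $\left\{ id,-id\right\}$; by definition $P$ is parallel.

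The only ingredient that is not a pure repackaging of Propositions \ref{P_kpi} and \ref{p_PLD2P} is the presentation of $\pi_{1}$ of a punctured sphere by loops around the punctures, and this is precisely where the genus-$0$ hypothesis is essential: on a surface of positive genus the handle generators contribute holonomies that the vertex curvatures do not control, which is what permits Example \ref{ExNP}. So I expect no real obstacle here beyond stating the topological fact cleanly; alternatively one can bypass $\pi_{1}$ entirely and construct the line distribution directly by fixing $\ell_{0}\in\Delta_{p_{0}}$ and transporting it along paths, the same angle computation showing well-definedness, but this is the same argument in disguise.
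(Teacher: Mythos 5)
Your proof is correct and follows essentially the same route as the paper: in both cases the point is that on a sphere the holonomy of a loop is a rotation by the enclosed curvature modulo $2\pi$ (Gauss--Bonnet, as in Proposition \ref{P_kpi}), which the hypothesis forces into $\mathbb{Z}\pi$, so the holonomy group lies in $\left\{ id,-id\right\}$. The only cosmetic difference is that you reduce to the standard generators of $\pi_{1}\left( P^{\ast}\right)$ encircling the punctures, whereas the paper applies the curvature computation directly to an arbitrary loop bounding a domain; your packaging is if anything slightly more careful, and you correctly identify genus $0$ as the essential hypothesis (cf.\ Example \ref{ExNP}).
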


\begin{proof}
Let $\gamma:[0,1]\rightarrow P^{\ast}$ be a closed loop. 
The parallel transport along $\gamma$ is a rotation of angle $\alpha=2\pi-\omega\left( D\right)$, 
where $D$ is the domain included on the left side of $\gamma$. 
Since by hypothesis, the curvature of each vertex is divisible by $\pi$, so is $\alpha$. 
Consequently, the holonomy group of $P$ can contain only $id$ and $-id$, that is, $P$ is parallel.
\end{proof}

\begin{example}
\label{ExNP}
Consider the double of the gray polygonal domain shown in Figure \ref{F2}. 
Each vertex has curvature either $\pi$ or $-\pi$.
However, the parallel transport along the curve $\gamma$ is a rotation of angle $\pi/2$.
\end{example}

\begin{figure}[ptb]
\begin{center}
\includegraphics[width=.3\textwidth]{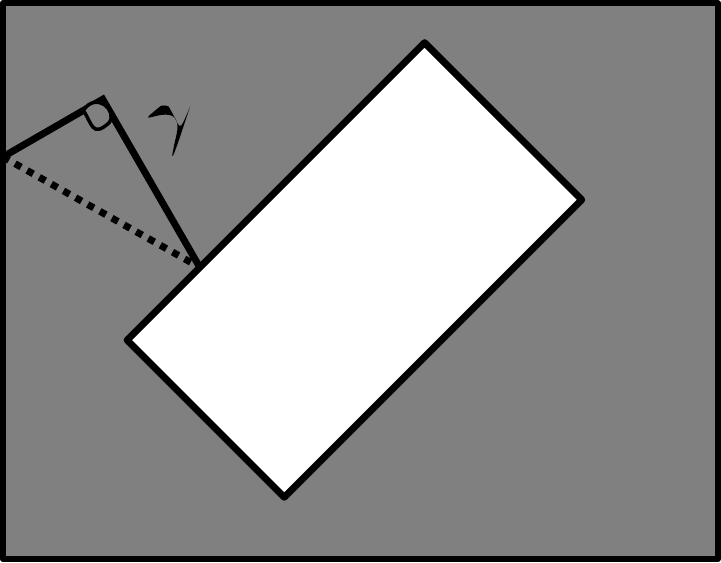}
\end{center}
\caption{Example of a non-parallel surface with vertices of curvature $\pm\pi$.}%
\label{F2}%
\end{figure}

\begin{proposition}
\label{p_AGS}On a parallel polyhedron, all strict geodesics are simple.
\end{proposition}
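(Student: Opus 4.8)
The plan is to argue by contradiction: suppose some strict geodesic $G$ on a parallel polyhedron $P$ has a proper self-intersection, i.e.\ there are instants $t_1 < t_2$ with $G(t_1) = G(t_2) =: p$ but $G'(t_1) \neq G'(t_2)$. Since $G$ is strict, it avoids all vertices, so the arc $\gamma = G|[t_1,t_2]$ lies entirely in $P^\ast$ and is a closed loop based at $p$. The key observation is that, because $P$ is parallel, parallel transport of \emph{lines} along $\gamma$ is trivial (it equals one of $\tau_p^p = \pm id$ on $\Delta_p$); hence the tangent line of $G$ at $t_1$ is carried by $||_\gamma$ to a line making angle $0$ with itself — but the tangent line at the endpoint of the transported geodesic direction is precisely the tangent line of $G$ at $t_2$. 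This forces $G'(t_1)$ and $G'(t_2)$ to span the same line, so $G'(t_2) = -G'(t_1)$ (they cannot be equal, as that would make $p$ a non-proper self-intersection consistent with $G$ being a single geodesic, contradicting $G'(t_1)\neq G'(t_2)$). In other words, $G$ runs over the loop $\gamma$ and comes back along the \emph{same} line in the \emph{opposite} direction.

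Next I would extract a contradiction from this anti-parallel return. Consider the point of $\gamma$ where $G$ first returns to a previously visited point, so that we may assume $\gamma$ is a \emph{simple} loop (a Jordan curve in $P^\ast$) with just the one corner at $p$, whose interior angle at $p$ is $\theta := \measuredangle(-G'(t_1), G'(t_2))$; by the previous paragraph $-G'(t_1)$ and $G'(t_2)$ lie on the same line, so the geodesic digon $\gamma$ has, at its unique vertex $p$, an interior angle equal to $0$ or $\pi$. Apply the Gauss–Bonnet theorem to the disk $D$ bounded by $\gamma$ (choosing the side of $\gamma$ appropriately so that $D$ is an embedded disk): the total curvature of the vertices of $P$ lying in the interior of $D$ equals $2\pi$ minus the exterior turning, which here is $2\pi - (\pi - \theta) = \pi + \theta \in \{\pi, 2\pi\}$ wait—one must be careful with which angle of the digon is the interior one. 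I will set this up cleanly: a geodesic loop with one corner of interior angle $\theta$ bounds a disk whose enclosed curvature is $\theta$ (the two geodesic sides contribute nothing, and the single exterior angle is $\pi - \theta$, so Gauss–Bonnet gives $\sum_{v\in \mathrm{int}\,D}\omega(v) = 2\pi - (\pi - \theta) - \pi = \theta$, using that for a loop of a single corner one can also run the argument on the complementary side). Either way the conclusion is that the enclosed curvature is $0$ or $\pi$.

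The final contradiction comes from reexamining what "the tangent line returns to itself" says combined with the sign. If the enclosed curvature is $0$, the disk $D$ is flat and isometric to a planar region; unfolding $\gamma$ onto the plane, a strict geodesic that closes up into such a corner forces the two geodesic edges to be collinear segments, and a direct planar inspection (or Lemma \ref{L_ap}, applied to the two sub-segments emanating from $p$) shows the geodesic would have to be a straight line through $p$ with $G'(t_1) = G'(t_2)$, contradicting that the intersection is proper. If the enclosed curvature is $\pi$, then the holonomy of $\gamma$ is $-id$, which means $||_\gamma$ reverses \emph{directions}, not just lines; but then the geodesic $G$, transported around $\gamma$, returns with direction $-G'(t_1)$, which is consistent — so here one instead uses that $P$ being parallel with a single enclosed vertex of curvature $\pi$ locally looks like a doubled half-plane near that vertex, and a strict geodesic cannot wrap around it and self-intersect transversally (again reducing, via a suitable unfolding, to the planar impossibility of a strict geodesic closing into a $0$- or $\pi$-angled corner without being straight). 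I expect the main obstacle to be this last step: pinning down precisely, via unfolding and Gauss–Bonnet bookkeeping, why an enclosed curvature in $\{0,\pi\}$ together with the collinear-return condition is geometrically impossible for a \emph{strict} geodesic — in particular handling the orientation/sign cases uniformly and making sure the chosen side of the Jordan loop really is an embedded disk.
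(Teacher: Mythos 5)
Your opening step is exactly the paper's argument: the velocity field of a geodesic is parallel along it, so $\dot G(t_2)=||_{\gamma}\dot G(t_1)$, and since $P$ is parallel this lies in $\{\pm\dot G(t_1)\}$; properness excludes the $+$ sign. The genuine gap is the remaining case $\dot G(t_2)=-\dot G(t_1)$, which you never actually close: your Gauss--Bonnet/unfolding plan is left explicitly unfinished (``I expect the main obstacle to be this last step''), and it moreover rests on the loop bounding an embedded disk, which need not happen --- a parallel polyhedron can have positive genus (flat tori, higher-genus translation surfaces), so a geodesic loop in $P^{\ast}$ may bound no disk on either side, and ``the curvature enclosed by $\gamma$'' is then not even defined. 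The passage where you pass to a first-return simple sub-loop also tacitly assumes its corner angle is again $0$ or $\pi$; that is true, but only because the holonomy argument applies to every sub-loop, which you do not say.

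The anti-parallel case has a two-line elementary resolution, and it is what the paper's phrase ``making an angle $\alpha\not\equiv 0\ [\pi]$'' silently relies on: if $G(t_1)=G(t_2)$ and $\dot G(t_2)=-\dot G(t_1)$, then $s\mapsto G(t_2-s)$ and $s\mapsto G(t_1+s)$ are strict geodesics with the same initial point and initial velocity, hence coincide (away from vertices the surface is locally Euclidean, so geodesics are locally straight and extend uniquely), giving $G(t_2-s)=G(t_1+s)$ for all $s\in[0,t_2-t_1]$; differentiating at $s=(t_2-t_1)/2$ yields $\dot G(m)=-\dot G(m)$ at the midpoint $m$, impossible for a unit-speed curve. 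With this substituted for your second and third paragraphs, your argument coincides with the paper's proof; as submitted, the key exclusion is missing and the proposed route to it would not work in general.
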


\begin{proof}
If there is a non-simple well defined geodesic, one portion of it is a (well
defined) closed geodesic arc $\gamma:[0,a]\rightarrow P^{\ast}$ making an
angle $\alpha\not \equiv 0~[\pi]$ at its base point. But, since $\gamma$ is a
geodesic $\dot{\gamma}\left(  a\right)  =||_{\gamma}\dot{\gamma}\left(
0\right)  \in\left\{  \pm\dot{\gamma}\left(  0\right)  \right\}  $ -- for $P$
is parallel -- and we get a contradiction.
\end{proof}

We have already mentioned that parallel polyhedra have been studied because of
their relation with rational billiards. One result of special interest for us
is the following lemma.

\begin{lem}
\label{L_MT} \cit{MT} 
There is a countable set $C\subset\tilde{\Delta}P^{\ast}$ such that, for any $u\in\Sigma P^{\ast}$ whose direction does not belong to
$C$, $\gamma_{u}$ is a simple dense ray.
\end{lem}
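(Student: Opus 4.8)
The plan is to reduce the statement to a classical fact about flat structures with parallel line fields (equivalently, translation surfaces up to a $\mathbb Z/2$ quotient) and the ergodicity of the directional flows, for which a reference (Kerckhoff--Masur--Smillie, cited in \cite{MT}) can be invoked. First I would fix a parallel line distribution $D$ on $P^\ast$, which exists by Proposition~\ref{p_PLD2P} once we observe that a parallel polyhedron is orientable and admits one. Using $D$ one identifies the set $\tilde\Delta P^\ast$ of line directions with the projective circle $\mathbb{RP}^1$, and for each non-singular line direction $\theta$ one has the directional (geodesic) foliation $\mathcal F_\theta$, whose leaves through non-vertex points are the maximal strict geodesics in that direction. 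Since $P$ is parallel, each such geodesic is simple by Proposition~\ref{p_AGS}; so the only thing that needs to be excluded by the countable set $C$ is \emph{non-density} of the ray.

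The core step is to show that for all but countably many directions $\theta$, every leaf of $\mathcal F_\theta$ (more precisely, every positive half-leaf issued from a non-vertex point) is dense. Here I would argue as follows. For a direction $\theta$ consider the first-return (interval-exchange-like) map of the flow in direction $\theta$ to a transversal made of a finite union of segments; because $P$ has finitely many vertices, this return map is defined off a countable set and is, generically in $\theta$, minimal. The standard input is that the set of directions $\theta$ for which the directional flow fails to be minimal is contained in a countable set: either the flow is minimal, or there is a saddle connection (a geodesic segment in direction $\theta$ joining two vertices), and there are only countably many saddle connections in total (each determined by its two endpoints among finitely many vertices and a homotopy class of connecting arc, cf.\ the counting argument in Lemma~\ref{L_Gb2P}); each saddle connection rules out exactly one direction. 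Let $C$ be the union of: the (countably many, by Proposition~\ref{P_kpi} and Gauss--Bonnet there are finitely many vertices, so countably many) directions of saddle connections, together with the directions in which some ray hits a vertex in finite time in a way producing a non-minimal decomposition. Then for $\theta\notin C$, the directional flow is minimal, hence every half-leaf from a non-vertex point is dense, and by Proposition~\ref{p_AGS} it is also simple; this is exactly the conclusion.

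It remains to pass from ``direction'' to ``unit vector'': given $u\in\Sigma P^\ast$ whose projective direction $\theta=[u]$ lies outside $C$, the ray $\gamma_u$ is a half-leaf of $\mathcal F_\theta$ through a non-vertex point, unless $u$ is a singular direction — but by Corollary~\ref{C_SD} the singular directions at any given point form a countable set, and more to the point, if $\gamma_u$ ran into a vertex it would, when continued, contribute a saddle connection and hence $[u]\in C$; so for $[u]\notin C$ the ray $\gamma_u$ is genuinely infinite, simple, and dense. I expect the main obstacle to be stating cleanly the countability of the ``bad'' set of directions: one must handle the directions in which a trajectory meets a vertex (these need not be saddle connections if only one end is a vertex), and observe that minimality is equivalent to the absence of a proper closed invariant set, which on such flat surfaces is governed precisely by saddle connections. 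Since Lemma~\ref{L_MT} is explicitly attributed to \cite{MT}, the cleanest route is to cite the corresponding statement there (Kerckhoff--Masur--Smillie unique ergodicity, or just minimality for a full-measure, co-countable set of directions) and merely indicate how Proposition~\ref{p_AGS} supplies simplicity; a self-contained proof would reprove minimality via the saddle-connection dichotomy sketched above.
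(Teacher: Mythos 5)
The paper offers no proof of this lemma at all --- it is quoted from \cite{MT} --- so your primary route (invoking the minimality/unique-ergodicity theory for flat structures surveyed there, with simplicity supplied by Proposition \ref{p_AGS} and countability of saddle-connection directions as in Lemma \ref{L_Gb2P}) coincides with what the paper does. One caution on your self-contained sketch: a trajectory that hits a vertex forward in time need \emph{not} extend to a saddle connection (only one end is singular), so for a direction outside $C$ you must still exclude the singular vectors $u$ themselves (countably many at each point, by Corollary \ref{C_SD}) rather than claim $[u]\in C$; you flag this difficulty yourself at the end, and with that fix your sketch is the standard saddle-connection dichotomy.
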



\section{Simple dense geodesics}

In this section we prove our main result.

\begin{lem}
\label{L_PLD}If $P$ admits a simple dense strict geodesic ray $G$, it admits a
parallel line distribution.
\end{lem}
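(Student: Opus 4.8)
The plan is to build the desired line distribution from the geodesic $G$ itself, using Lemma \ref{L_LLD} to obtain a locally Lipschitz line distribution $D$ on $P^{\ast}$ tangent to $G$, and then to upgrade ``locally Lipschitz'' to ``parallel''. So the real content is to show that this $D$ has geodesic integral curves that unfold to families of parallel lines on small disks. First I would recall that $D$ is already defined and locally Lipschitz by Lemma \ref{L_LLD}; in particular, through every point of $P^{\ast}$ there is a well-defined integral curve of $D$, and I must argue these integral curves are geodesics and locally parallel.

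The key step is a local analysis near an arbitrary point $p\in P^{\ast}$. Fix a small disk $U$ around $p$ contained in $P^{\ast}$, isometric to a planar disk, and transversal to $G$. Since $G$ is dense, it meets $U$ in infinitely many arcs; each such arc is a geodesic, hence a straight chord of the planar disk $U$, and $D$ is tangent to it along the whole arc. The crucial claim is that all these chords are mutually parallel. Suppose two of them, coming from parameter intervals far apart on $G$, were not parallel; then inside $U$ they would either cross or diverge, and I would use Lemma \ref{L_ap} together with the fact that $G$ is \emph{simple} to derive a contradiction: two non-parallel chords of a fixed disk must either intersect (forbidden, $G$ is simple) or, by the ``in particular'' clause of Lemma \ref{L_ap}, have midpoints forced apart by a definite amount, which — combined with density forcing arbitrarily many chords through $U$ with midpoints accumulating — contradicts the bound. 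Hence all the chords of $G$ inside $U$ are parallel to a fixed direction $\ell_p\in\tilde\Delta U$. Since $\mathrm{Im}(G)\cap U$ is dense in $U$ and $D$ is continuous, $D$ equals this constant line field $\ell_p$ on all of $U$. Therefore $D$ is locally the parallel line field of a fixed direction, so its integral curves are straight chords (geodesics), and on any small enough domain they unfold to parallel lines: $D$ is a parallel line distribution.

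The main obstacle I expect is making the density-plus-simplicity argument uniform: one must ensure that, for the fixed disk $U$, the chords of $G$ lying in $U$ really do become ``arbitrarily close to non-parallel'' if $D$ fails to be locally constant, so that Lemma \ref{L_ap}'s quantitative bound bites. This is handled by noting that $D$ is continuous (indeed locally Lipschitz) on $U$ by Lemma \ref{L_LLD}, so if $D$ took two distinct values on $U$ it would take a continuum of values, and since $\mathrm{Im}(G)$ is dense we could pick chords of $G$ realizing values of $D$ as close as we like to any prescribed pair; shrinking $U$ if necessary so that the relevant ratio $d(m_1,m_2)/l$ in Lemma \ref{L_ap} is small then yields the contradiction. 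A secondary point to check is that the integral curves of $D$, once we know $D$ is locally a constant line field away from $V(P)$, do not develop the branching phenomenon discussed in the Prerequisites at negatively curved vertices — but this is immaterial here since a \emph{line distribution} is by definition only required on $P^{\ast}$, and ``parallel'' was defined purely in terms of the foliation and local unfoldings on $P^{\ast}$. With the local statement in hand, the global conclusion is immediate.
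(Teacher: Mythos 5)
Your overall strategy (take the locally Lipschitz distribution $D$ from Lemma \ref{L_LLD} and show it is locally a constant line field) is the right one, and it is the paper's, but the step you call ``the crucial claim'' has a genuine gap. Working inside one small flat disk $U$, Lemma \ref{L_ap} gives you nothing: for two disjoint chords of $U$ of half-length $l$ with midpoints $m_1,m_2$, the bound is $|\sin\delta|<d(m_1,m_2)/l$, and since chords of $U$ have $l$ at most the radius of $U$ while $d(m_1,m_2)$ can be of the same order, the right-hand side need not be small -- shrinking $U$ does not help, because $l$ shrinks proportionally. Indeed, density plus pairwise disjointness inside a disk simply does not force parallelism: the pencil of lines through a point outside $U$ cuts $U$ in a family of pairwise disjoint, non-parallel chords whose union is all of $U$. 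So nothing in your disk-level argument rules out such a ``fan'' configuration; what rules it out is global information, namely that the arcs continue far beyond $U$, and that is exactly where the difficulty lies, since the continuation leaves any fixed flat chart (the polyhedron has vertices), so the planar Lemma \ref{L_ap} cannot be applied to the prolonged arcs directly. At best, applying your argument in a vertex-free ball of radius $R$ to arcs crossing a much smaller concentric disk gives an angle variation of order $r/R$ -- this is precisely the Lipschitz estimate of Lemma \ref{L_LLD}, not parallelism.

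The paper's proof supplies the missing ingredients. First, it works with \emph{integral lines of $D$} rather than arcs of $G$, shows that all but finitely many of them are complete (avoid vertices forever), and for a point $p$ on such a complete line takes a segment of length $2l$ with $l$ \emph{arbitrarily large}, together with a thin neighborhood of it that unfolds isometrically onto a planar strip; only then does the non-intersection argument give the slope bound $|s_p(y)/y|\le 1/l\to 0$. Second, since this vanishing is obtained only at points of the (dense but not full) set $C$ of complete integral lines, one still has to propagate it: the paper uses the Lipschitz continuity of $s_p$, a law-of-sines computation to show $s_p'(u)=0$ whenever $p+u\nu_p\in C$, and finally integration of the a.e.\ zero derivative of the Lipschitz function $s_p$ to conclude $s_p\equiv 0$, i.e.\ $D$ is parallel. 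None of this analytic work appears in your proposal, and without it (or some substitute allowing arcs of unbounded length to be compared) the conclusion that the chords in $U$ are parallel is unjustified.
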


\begin{proof}
By Lemma \ref{L_LLD}, $P$ admits a Lipschitz continuous line distribution $D$.
We shall show that it is actually parallel.

Clearly, the integral lines of this distribution are geodesics, for any arc of
such a line is limit of arcs of $G$. We claim that only finitely many integral
lines meet some vertex. Indeed two integral lines meeting one vertex $v$ must
form an angle at least $\pi$, otherwise an arc of $G$ through a point inside
the sector they determine should intersect one of them. Hence all but a final
number of integral lines are infinite in both direction, we call them
\emph{complete integral lines}. The union of all complete integral lines is
denoted by $C$.

Consider $x\in P^{\ast}$ and $B$ a ball centered at $P$ that does not contain
any vertex. Restricted to this ball, one can define Lipschitz unit vector
fields $\tau$, $\nu$ such that $\tau$ is parallel to $D$ and $\nu$ is normal
to $\tau$. Define $s_{p}\left(  y\right)  $ as the slope of $\tau_{p+y\nu_p}$ in
the basis $\left(  \tau_{p},\nu_{p}\right)  $. Set
\[
\phi\left(  p\right)  =\lim\sup_{h\rightarrow0}\left\vert \frac{s_{p}\left(
y\right)  }{y}\right\vert \text{.}%
\]

We claim that $\phi\left( p\right) =0$ almost everywhere, and so, $s_{p}$ is
derivable at $0$ and its derivative is $0$ for almost all $p$.

Take $p\in C$ and choose a sequence $y_{n}$ of real numbers, tending to $0$, 
such that $p_{n}\overset{\mathrm{def}}{=}p+y_{n}\nu_p$ belongs to $C$. 
Let $s$ (respectively $s_{n}$) be the segment of the integral line
through $p$ (respectively $p_{n}$) of lengh $2l$, whose midpoint is $p$
(respectively $p_{n}$). Then $s$ admits a neighborhood $N$ which, endowed with its
own intrinsic metric, is isometric to a neighborhood of a $2l$ long segment in
$\mathbb{R}^{2}$. For $n$ large enough, $s_{n}$ is included in $N$. Since
$s_{n}$ and $s$ don't intersect, we have $\left\vert \frac{s_{p}\left(
y_{n}\right)  }{y_{n}}\right\vert \leq\frac{1}{l}$. Hence, any adherence value
of $\left\vert \frac{s_{p}\left(  y_{n}\right)  }{y_{n}}\right\vert $ is at
most $\frac{1}{l}$, for arbitrarily large $l$, whence $\left\vert \frac
{s_{p}\left(  y_{n}\right)  }{y_{n}}\right\vert \rightarrow0$.

Now, we drop the assumption $p_{n}\in C$ and consider $q_{n}=p+z_{n}\nu_{p}\in
C$ such that $d\left(  p_{n},q_{n}\right)  <y_{n}^{2}$. Since $s_{p}$ is
$L$-Lipschitz continuous, we get
\begin{align*}
\left\vert \frac{s_{p}\left(  y_{n}\right)  }{y_{n}}-\frac{s_{p}\left(
z_{n}\right)  }{z_{n}}\right\vert  &  \leq\left\vert \frac{s_{p}\left(
y_{n}\right)  }{y_{n}}-\frac{s_{p}\left(  z_{n}\right)  }{y_{n}}\right\vert
+\left\vert \frac{s_{p}\left(  z_{n}\right)  }{y_{n}}-\frac{s_{p}\left(
z_{n}\right)  }{z_{n}}\right\vert \\
&  \leq L\left\vert \frac{y_{n}-z_{n}}{y_{n}}\right\vert +\left\vert
\frac{s_{p}\left(  z_{n}\right)  }{z_{n}}\right\vert \left\vert \frac
{y_{n}-z_{n}}{y_{n}}\right\vert \\
&  \leq\left(  L+2\varepsilon\right)  \left\vert y_{n}\right\vert
\rightarrow0\text{.}%
\end{align*}
It follows that $\left\vert \frac{s_{p}\left(  y_{n}\right)  }{y_{n}%
}\right\vert $ still tends to $0$, whence $\phi\left(  p\right)  =s_{p}%
^{\prime}\left(  0\right)  =0$ for any $p\in C$.

\begin{figure}[ptbh]
\begin{center}
\includegraphics[width=.3\textwidth]{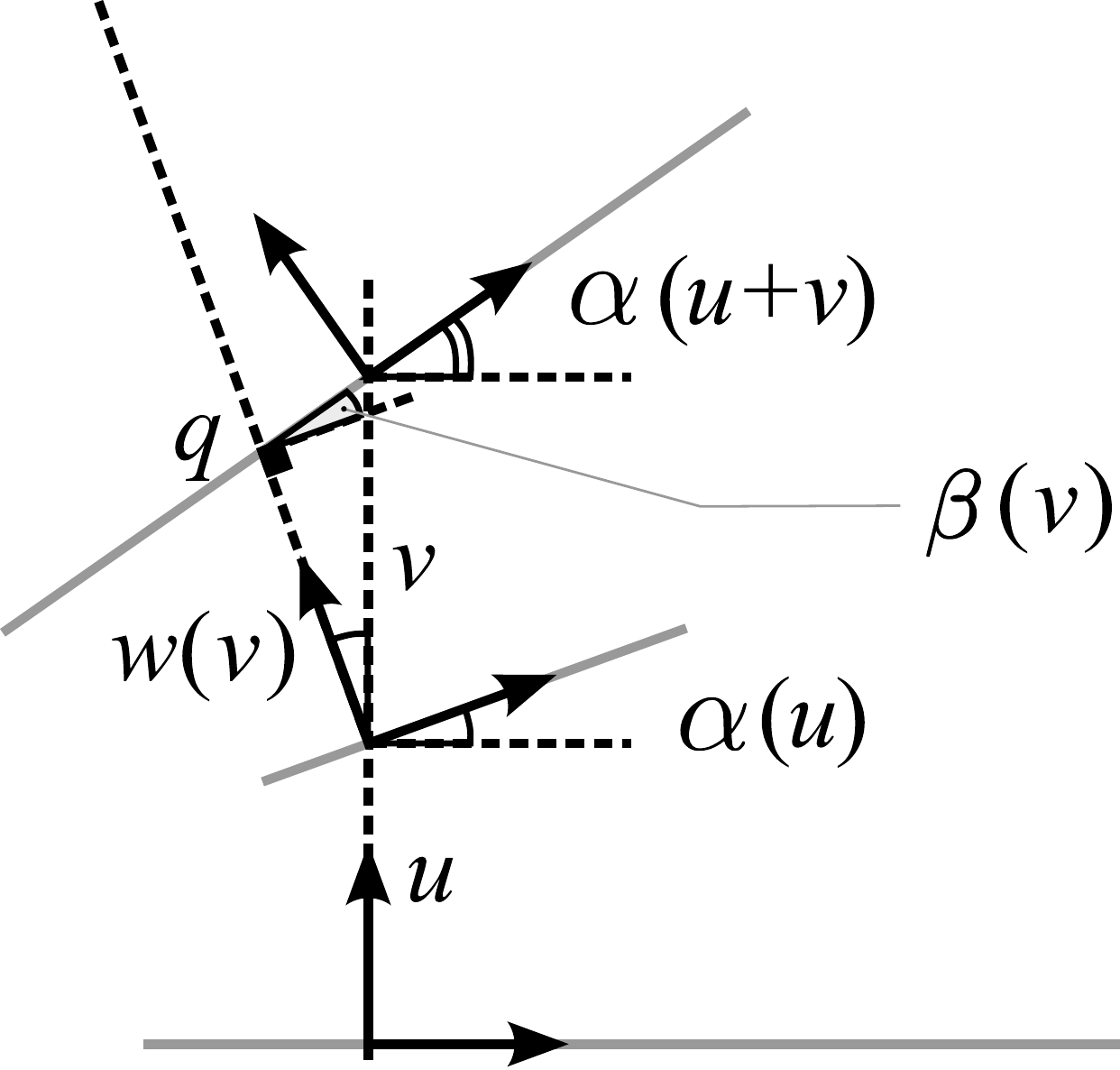}
\end{center}
\caption{Computation of $s_{p}^{\prime}\left(  u\right)  $ in the proof of
Lemma \ref{L_PLD}.}%
\label{F4}%
\end{figure}

Now, we prove that $s_{p}^{\prime}\left(  u\right)  =0$ for any $p$, $u$ such
that $p+u\nu_{p}\in C$. Let $\alpha\left(  y\right)  =\arctan s_{p}\left(
y\right)$, so%
\[
\alpha\left(  u+v\right)  -\alpha\left(  u\right)  =\arctan s_{p+uN}\left(
w\left(  v\right)  \right)  \overset{\mathrm{def}}{=}\beta\left(  v\right)\text{,}
\]
where $w\left(  v\right)  $ is the distance between $p+u\nu_{p}$ and the
intersection point $q$ between the lines $p+\left(  u+v\right)  \nu
_{p}+\mathbb{R\tau}_{p+\left(  u+v\right)  \nu_{p}}$ and $p+u\nu
_{p}+\mathbb{R\nu}_{p+u\nu_{p}}$ (see Figure \ref{F4}).

On the one hand $\lim\frac{\beta\left(  v\right)  }{w\left(  v\right)
}=s_{p+u\nu_{p}}^{\prime}\left(  0\right)  =0$, on the other hand the law of
sines in the triangle $p+u\nu_{p}$, $p+\left(  u+v\right)  \nu_{p}$, $q$ gives%
\[
\frac{w\left(  v\right)  }{v}=\frac{\cos\left(  \beta\left(  v\right)
+\alpha\left(  u\right)  \right)  }{\cos\beta\left(  v\right)  }%
\underset{v\rightarrow0}{\rightarrow}\cos\alpha\left(  u\right)  \text{,}%
\]
whence%
\[
\frac{\alpha\left(  u+v\right)  -\alpha\left(  u\right)  }{v}=\frac
{\beta\left(  v\right)  }{w\left(  v\right)  }\frac{w\left(  v\right)  }%
{v}\underset{v\rightarrow0}{\rightarrow}0\text{.}%
\]

Hence, for any $p$ and almost all $u$ we have $s_{p}^{\prime}\left(  u\right)
=0$. Now, since $s_{p}$ is Lipshitz continuous, we have
\[
s_{p}\left(  y\right)  =s_{p}\left(  0\right)  +\int_{0}^{y}s_{p}^{\prime
}\left(  u\right)  du=0+0\text{ }%
\]
for any $p$, $y$, and therefore $D$ is parallel.
\end{proof}

Now, we are in a position to state the main result of the paper.

\begin{thm}
\label{TMain}The following statements are equivalent.

\begin{enumerate}
\item \label{P_P}$P$ is parallel.

\item \label{P_ESDR}$P$ is orientable and admits a simple dense geodesic ray.

\item \label{P_2R}$P$ admits two simple dense geodesic rays which are
not orthogonal to each other at some point.

\item \label{P_MSDR}For any $p\in P^{\ast}$ there exists a countable set $C \subset \Sigma_{p}$ such that 
for any $u\in\Sigma_{p} \setminus C$, $\gamma_{u}$ is a strict simple ray.
\end{enumerate}
\end{thm}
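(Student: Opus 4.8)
The plan is to show that each of the statements (2), (3), (4) is equivalent to (1). The three implications out of (1) are short. If $P$ is parallel then it is orientable, for otherwise some loop would have orientation-reversing holonomy, a reflection outside $\{\pm\operatorname{id}\}$; and, by Lemma \ref{L_MT}, at any prescribed $p\in P^{\ast}$ all but countably many $u\in\Sigma_{p}$ yield a simple dense ray $\gamma_{u}$, which is necessarily strict, being non-singular. Picking one such $u$ gives (2), picking two that are not orthogonal gives (3), and (4) follows; alternatively Proposition \ref{p_AGS} together with Corollary \ref{C_SD} gives (4) directly, with $C=S_{p}$.

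For $(2)\Rightarrow(1)$ and $(3)\Rightarrow(1)$ I would use the line-distribution machinery. A simple dense ray $G$, even one that is not strict, yields via Lemma \ref{L_LLD} a locally Lipschitz line distribution $D$ tangent to $G$, and the argument of Lemma \ref{L_PLD}, which uses only the density and simplicity of $G$, shows that $D$ is parallel. Thus under (2), $P$ is orientable and carries a parallel line distribution, so (1) follows from Proposition \ref{p_PLD2P}. Under (3) the two rays give parallel line distributions $D_{1}$, $D_{2}$ tangent to $G_{1}$, $G_{2}$ respectively; since $G_{1}$ and $G_{2}$ are non-orthogonal at a common point, so are $D_{1}$ and $D_{2}$ there, and Proposition \ref{p_PLD2P} again gives (1).

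The substantial implication is $(4)\Rightarrow(1)$, which I would prove contrapositively: if $P$ is not parallel, I exhibit a point $p\in P^{\ast}$ at which an open, hence uncountable, set of directions $u$ make $\gamma_{u}$ non-simple, contradicting (4). Not being parallel, $P$ carries a piecewise-geodesic loop $\gamma\subset P^{\ast}$ whose holonomy is either a reflection (if $P$ is non-orientable) or a rotation by some $\theta\notin\pi\mathbb{Z}$. A thin tubular neighbourhood $N(\gamma)\subset P^{\ast}$ is then a flat M\"{o}bius band, respectively a flat cone-annulus $\{r_{-}<r<r_{+}\}/(\phi\sim\phi+\alpha)$ with $\alpha\equiv\theta\pmod{2\pi}$. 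In a flat M\"{o}bius band every geodesic making a small non-zero angle with the core self-intersects; in a cone-annulus a chord self-intersects as soon as its angular width exceeds $\alpha$, which is possible precisely when $\alpha<\pi$ and $r_{-}/r_{+}<\cos(\alpha/2)$ -- in particular whenever the annulus reaches its apex ($r_{-}=0$), that is, when $\gamma$ encircles a single positively curved vertex $v$ with $\omega(v)>\pi$. In each of these situations the directions at a fixed interior point $p$ of $N(\gamma)$ giving a self-intersecting segment form a non-empty open subset of $\Sigma_{p}$, and such a segment lies entirely in $P^{\ast}$; hence $\gamma_{u}$ is strict but not simple for all $u$ in an open set, contradicting (4).

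The main obstacle is the orientable case in which every bad loop encircles only vertices of curvature in $(0,\pi]$ or negatively curved vertices, so that the associated cone-annulus has $\alpha\geq\pi$ (or $\alpha<\pi$ but too narrow), and the local construction above fails. To handle it I would choose $\gamma$ to bound a disk $\Omega$: isotoping $\partial\Omega$ to be convex, Gauss-Bonnet makes its total turning equal to $2\pi-\sum_{v\in\Omega}\omega(v)$, and a short combinatorial argument on the multiset of vertex curvatures -- which is not contained in $\pi\mathbb{Z}$ since $P$ is not parallel, while its sum is a multiple of $2\pi$ -- shows that $\Omega$ (or its complement) can be chosen so that this quantity, hence $\alpha$, lies in $(0,\pi)$ and close enough to a suitable value to secure the width condition. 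Carrying this out uniformly over all topologies, and controlling the width of the cone-annulus, is where the genuine difficulty lies.
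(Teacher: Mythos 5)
Your handling of (1) $\Rightarrow$ (2),(3),(4) and of (2),(3) $\Rightarrow$ (1) is essentially the paper's: Lemma \ref{L_MT} (or Proposition \ref{p_AGS} with Corollary \ref{C_SD}) in one direction, Lemmas \ref{L_LLD}, \ref{L_PLD} and Proposition \ref{p_PLD2P} in the other. (One sentence would be welcome on why Lemma \ref{L_PLD}'s strictness hypothesis is harmless: a simple non-periodic ray meets each vertex at most once, so a tail of $G$ is strict and still dense.) The genuine gap is your implication (4) $\Rightarrow$ (1). You propose a direct contrapositive holonomy argument, and you concede yourself that the decisive case -- an orientable $P$ where every bad loop yields a cone-annulus with angle at least $\pi$, or one too narrow -- is not carried out (``where the genuine difficulty lies''). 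Even the cases you do treat are fragile: a geodesic at small angle to the core of a thin flat M\"obius band, or a chord of a thin cone-annulus, may leave the tubular neighbourhood before it has a chance to self-intersect, so the ``open set of self-intersecting directions'' is not established even there. As written, the chain of implications does not close, so the theorem is not proved.

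What you missed is that no such argument is needed: the paper closes the equivalence as a cycle. Condition (4) is the output of Lemma \ref{L_MT}, i.e.\ the rays $\gamma_{u}$ it provides are \emph{dense} (this is how the paper uses it -- its proof states that (\ref{P_MSDR}) ``obviously implies'' (\ref{P_2R}) and (\ref{P_ESDR})); so at any $p$ one simply picks two non-excluded, non-orthogonal directions to get (3), and (3) $\Rightarrow$ (1) is exactly the implication you already have via Lemma \ref{L_PLD} and Proposition \ref{p_PLD2P}. Thus the entire difficulty described in your last two paragraphs evaporates. If instead you insist on the literal reading of (4), with no density, then what you are attempting is a strictly stronger statement (non-parallel implies an open set of directions at some point whose strict geodesics self-intersect), for which you would owe a complete proof -- and your sketch does not provide one.
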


\begin{proof}
By Proposition \ref{p_PLD2P} and Lemma \ref{L_PLD}, (\ref{P_P}), (\ref{P_2R})
and (\ref{P_ESDR}) are equivalent. By Lemma \ref{L_MT}, (\ref{P_P}) implies
(\ref{P_MSDR}) which obviously implies (\ref{P_2R}) and (\ref{P_ESDR}).
\end{proof}


\section{Convex case}

The aim of this section is to supplement Theorem \ref{TMain} in the convex
case by adding a new statement, namely the existence of a (not necessarily dense) simple geodesic ray.

\begin{lem}
\label{lem1}Let $P$ be a convex polyhedron. There is a finite set $F$ such that, 
for any simple geodesic $G$ of $P$ and any two arcs of $G$ lying on the same face of $P$, the angle between them belongs to $F$.
\end{lem}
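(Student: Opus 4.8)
The plan is to reformulate the statement in terms of holonomy and then reduce it to a uniform bound on winding numbers, which is the point where simplicity of $G$ enters.

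First I would pin down what ``the angle between two arcs on a face'' means. A face $\Phi$ is a flat convex polygon, so parallel transport inside it (away from its corners) is path\nobreakdash-independent; if $a$ and $b$ are arcs of a geodesic $G$ met at $p=G(t_{1})\in a$ and $q=G(t_{2})\in b$, I define the angle to be $\measuredangle\bigl(||_{\sigma}G'(t_{1}),G'(t_{2})\bigr)\in\mathbb{R}/\pi\mathbb{Z}$, computed along any path $\sigma$ inside $\Phi$ from $p$ to $q$. Let $\gamma=G|_{[t_{1},t_{2}]}$ be the corresponding subarc of $G$; since $P$ is convex, $G$ is strict, and choosing $\sigma$ off the corners the loop $\ell=\gamma\cdot\bar\sigma$ lies in $P^{*}$. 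Because $\gamma$ is a geodesic, $||_{\gamma}$ carries the direction of $a$ at $p$ to that of $b$ at $q$, so the desired angle is exactly the rotation angle of $||_{\ell}$. By the Gauss--Bonnet computation already used in the proof of Proposition~\ref{P_kpi}, $||_{\ell}$ is a rotation by some $\sum_{v\in V(P)}c_{v}\,\omega(v)$ modulo $2\pi$, with $c_{v}\in\mathbb{Z}$, and the integers $|c_{v}|$ are at most one more than the number of self\nobreakdash-crossings of $\ell$ (a loop with $k$ self\nobreakdash-crossings is a product of at most $k+1$ simple loops, each of whose holonomies is $-\omega(D)$ for a disc $D$ that is a union of vertices). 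Hence the lemma reduces to a uniform bound $|c_{v}|\le N$ with $N=N(P)$: then $F=\{\sum_{v}c_{v}\,\omega(v)\bmod\pi:|c_{v}|\le N\}$ is a finite set containing every such angle.

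Next, the bound is immediate when $a$ and $b$ are \emph{consecutive} visits of $G$ to $\Phi$: taking $p$ to be the exit point of $a$ and $q$ the entry point of $b$ on $\partial\Phi$, and $\sigma$ a short arc running just inside $\partial\Phi$, the subarc $\gamma$ stays outside $\Phi$ and $\ell$ is a \emph{simple} closed curve, so it bounds a disc and $|c_{v}|\le 1$. For an arbitrary pair of arcs, $\gamma$ and $\sigma$ are still individually simple, so the self\nobreakdash-crossings of $\ell$ are exactly the crossings of $\sigma$ with $\gamma\cap\Phi$; the latter is a finite disjoint family of chords of $\Phi$, namely the intermediate arcs of $G$ on $\Phi$, each of which the straight segment $\sigma=[p,q]$ meets at most once (two segments of a convex polygon meet at most once). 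Thus $\ell$ has at most as many self\nobreakdash-crossings as there are intermediate arcs, and $\max_{v}|c_{v}|$ is bounded by that number plus one.

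The main obstacle is to replace ``number of intermediate arcs'' -- which a priori is not bounded -- by a constant depending only on $P$; equivalently, to show that a loop formed by one simple geodesic arc and one straight arc inside a single face cannot wind more than boundedly often around any vertex. I expect to obtain this from a no\nobreakdash-spiralling phenomenon: since every vertex of a convex polyhedron has positive curvature and the total curvature is $4\pi$, a geodesic arc cannot wrap many times around any union of vertices without creating a self\nobreakdash-intersection. Quantitatively, one can cut $P$ along a shortest arc $\eta_{v}$ from a vertex $v$ to a fixed basepoint of $\Phi$, develop, and observe that a straight line crosses each of the two copies of the cut at most a bounded number of times (because it subtends an angle $<\pi$ at the developed image of $v$), so a simple geodesic meets the transversal $\eta_{v}$ with algebraic multiplicity bounded in terms of $P$ alone; the same being true of $\sigma\cap\eta_{v}$, one controls the winding of $\ell$ about $v$ and hence the constants $c_{v}$. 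Turning this heuristic into a clean bound $N=N(P)$ -- taming the possibly very long subarc $\gamma$ using the curvature budget $4\pi$ and simplicity -- is the technical heart of the argument; granting it, the finite set $F$ exhibited above works, with a little extra care for the degenerate situations where $a$ and $b$ share a boundary endpoint or $G$ runs along an edge.
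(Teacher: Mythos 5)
Your setup is sound as far as it goes: identifying the angle with the rotation angle of the holonomy of the loop $\ell$ formed by the subarc $\gamma$ of $G$ and a path $\sigma$ inside the face, writing that rotation as $\sum_v c_v\,\omega(v)$ with integer coefficients, and settling the case of \emph{consecutive} visits, where $\ell$ is simple and $|c_v|\le 1$. For comparison, the paper's proof is exactly this digon argument applied directly to an arbitrary pair of arcs: join the endpoints of the subarc of $G$ running from one arc to the other by a segment inside the face, apply Gauss--Bonnet to the resulting geodesic digon, and read off the angle as the curvature enclosed, so that $F=\{\sum_{v\in W}\omega(v):W\subset V\}$; no winding-number bound is introduced there. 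The defect of your proposal is that for a general pair of arcs you do not reach the conclusion at all: you reduce the lemma to a uniform bound $\max_v|c_v|\le N(P)$ and then explicitly leave that bound --- which you yourself call the technical heart --- as a heuristic. As written, the proposal is an incomplete proof.

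Moreover, the route you sketch for the missing bound would fail as stated. The number of intersections of a simple geodesic with a fixed transversal $\eta_v$ is \emph{not} bounded in terms of $P$ alone: an isosceles tetrahedron carries arbitrarily long simple closed geodesics (and simple dense rays), and these cross any fixed segment issuing from a vertex arbitrarily many times. Your justification --- that after cutting along $\eta_v$ and developing, a straight line crosses each copy of the cut boundedly often --- is only valid between two consecutive crossings of the cut; the geodesic does not develop to a single straight line across its returns, so this gives no control on the total number of crossings. The quantity that can be bounded is a signed one, namely the winding of $\ell$ about a vertex (ultimately because a geodesic cannot wind more than on the order of $\pi/(2\pi-\omega(D))$ times around a cluster $D$ of vertices with $\omega(D)<2\pi$), and proving that cancellation is precisely the step that is missing from your argument. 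Your observation that the joining segment may cross intermediate chords of $G$ in the face, so that the digon need not be simple, is a legitimate concern --- it is the point the paper's one-sentence proof passes over --- but within your own framework the lemma is not established until the bound on the $c_v$ is actually proved.
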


\begin{proof}
The two arcs can be seen as the external parts of a longer arc of $G$. Joining
the endpoints of this arc by a segment produces a geodesic digon. 
By the Gauss Bonnet theorem, the sum of its angles, that is, the angle between the arcs, 
equals the curvature included in the digon, and so belongs to 
$F=\left\{ \sum_{v\in W}\omega\left( v\right) |W\subset V\right\}$, 
where $V$ denotes the set of vertices of $P$.
\end{proof}

Let $G$ be a simple geodesic ray of $P$. A point of $x=G\left(  t\right)  $ is
said to be of the \emph{first kind}, if there exists arbitrary small
$\varepsilon>0$ such that the intersection $\mathrm{Im}\left(  G\right)  \cap
B\left(  x,\varepsilon\right)$ is arcwise connected. It is said to be of
the \emph{second kind} if there exits points $x_{n}=G\left(  t_{n}\right)  $
and $x_{n}^{\prime}=G\left(  t_{n}^{\prime}\right)  $ such that $t_{n}$ and
$t_{n^{\prime}}$ tend to infinity, $x_{n}$ and $x_{n}^{\prime}$ tend to $x$,
and are locally separated by the arc of $G$ through $x$. The point $x$ is said
to be of the \emph{third kind} if (a) there exist points $x_{n}=G\left(
t_{n}\right)  $ tending to $x$ while $t_{n}$ tends to infinity, and (2) there
exits $\varepsilon>0$ such that the intersection of one of the two open halves of $B\left( x,\varepsilon\right)$ delimited by the arc of $G$ through $x$ does
not intersect $G$. Let $K_{i}\left(  G\right)  \subset\mathbb{R}^{+}$ be the
set of $t$ such that $G\left(  t\right)  $ is of the $i^{th}$ kind. It is easy
to see that $\mathbb{R}^{+}=\bigcup_{i=1,2,3}K_{i}\left(  G\right)  $.

\begin{lem}
\label{L_CK}If $G$ is a simple geodesic ray on a convex polyhedron $P$, then
all its points have the same kind.
\end{lem}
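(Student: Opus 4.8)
The plan is to show that the three ``kinds'' of points along $G$ are mutually exclusive in a very strong sense: the set of $t$ realizing each kind is both open and closed in $\mathbb{R}^+$, so by connectedness exactly one of $K_1(G)$, $K_2(G)$, $K_3(G)$ is all of $\mathbb{R}^+$. The classification of $\mathbb{R}^+$ as the union $K_1\cup K_2\cup K_3$ is already observed, so the content is the clopen-ness together with the non-emptiness of one of them.

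First I would establish that $K_1(G)$ is open: if the trace of $G$ near $x=G(t)$ is a single arc, the same is true at all nearby parameters, so $K_1$ is open, and in fact a point of the first kind has a whole sub-ray-worth of first-kind neighbours. Next, I would show $K_1$ is closed, equivalently that $K_2\cup K_3$ is open. This is where Lemma \ref{lem1} does the real work: near a point $x=G(t)$ that is \emph{not} of the first kind, infinitely many returning arcs $G(t_n)\to x$ accumulate on the arc through $x$; choosing a small face-chart around $x$, each returning arc is (essentially) a straight segment making an angle with the central arc that, by Lemma \ref{lem1}, must lie in the \emph{finite} set $F$. Hence these angles are bounded away from $0$ (taking the nonzero elements of $F$), which forces the returning arcs to cross into both half-disks delimited by the central arc, or to stay on one side in a controlled way. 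I would analyze the two possibilities: if returning arcs accumulate from both sides (second kind) or from one side leaving the other half-disk untouched (third kind), and verify that each of these configurations, being detected on a fixed small ball $B(x,\varepsilon)$ with the angle-rigidity from $F$, persists at all nearby parameters $t'$ with $|t-t'|$ small. Thus $K_2$ and $K_3$ are each open.

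It remains to see that $K_2$ and $K_3$ are themselves mutually exclusive at the level of the whole ray, and that at least one $K_i$ is nonempty — the latter is automatic since $\mathbb{R}^+=K_1\cup K_2\cup K_3\ne\varnothing$. For the mutual exclusivity, the angle set $F$ is again the key: a second-kind point would require returning arcs on both sides with slopes in $F\setminus\{0\}$, while a third-kind point forbids returns on one side; these local pictures cannot both be the ``$K$'' that equals $\mathbb{R}^+$ because $G$ is a single connected ray, so once one kind fills an open-closed piece it fills everything. Since all three sets are open, pairwise, and cover the connected space $\mathbb{R}^+$, exactly one is nonempty and equals $\mathbb{R}^+$; that is the assertion.

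The main obstacle I anticipate is the openness of $K_2$ and $K_3$, i.e.\ showing that the ``second kind'' and ``third kind'' phenomena, which are stated via sequences $t_n\to\infty$, are genuinely \emph{local} and stable under small perturbations of the base parameter. The subtlety is that a returning arc near $x$ could conceivably be tangent-like and only barely on one side; here one must use the quantitative consequence of Lemma \ref{lem1} — that the crossing angle is one of finitely many fixed values, hence bounded below — to rule out degenerate near-tangential returns and to transfer the side-configuration from $x$ to $G(t')$ for $t'$ close to $t$. Once that rigidity is in hand, the clopen/connectedness argument closes the proof cleanly.
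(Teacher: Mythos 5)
Your overall skeleton is the right one and is the same as the paper's: each set $K_i(G)$ is open, the three sets cover $\mathbb{R}^+$ and are pairwise disjoint, so connectedness forces one of them to be all of $\mathbb{R}^+$. But the step where you claim the ``real work'' happens is justified incorrectly. You argue that, by Lemma \ref{lem1}, the angle between a returning arc and the central arc lies in the finite set $F$ and is therefore ``bounded away from $0$'', which ``forces the returning arcs to cross into both half-disks''. This is backwards on both counts: $0\in F$ (take $W=\varnothing$ in the definition of $F$), so no lower bound on the angle exists; and a returning arc crossing the arc through $x$ at a nonzero angle would be a proper self-intersection of $G$, contradicting simplicity --- so such crossings are exactly what cannot occur. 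The fact you actually need is that returning arcs are eventually \emph{parallel} to the central arc and converge to it as arcs. That is supplied not by Lemma \ref{lem1} but by Lemma \ref{L_ap}: two non-intersecting segments of length $2l$ whose midpoints are at distance $d$ make an angle $\delta$ with $\left\vert\sin\delta\right\vert< d/l$, so simplicity of $G$ plus $G(t_n)\to G(t)$ gives convergence of the arcs $G([t_n-\varepsilon,t_n+\varepsilon])$ to $G([t-\varepsilon,t+\varepsilon])$. Once you have this arc-convergence, the local picture at $x$ (single arc, accumulation on both sides, or one free half-ball plus accumulation) transfers verbatim to $G(t')$ for $t'$ near $t$, which is the one-line openness argument the paper uses; without it, your stability claim for the second and third kinds has no support.

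A smaller point: your final paragraph on mutual exclusivity is circular (``these local pictures cannot both be the $K$ that equals $\mathbb{R}^+$ \dots so once one kind fills an open-closed piece it fills everything''). Pairwise disjointness should instead be read off directly from the definitions: a first-kind point admits arbitrarily small balls meeting $\mathrm{Im}(G)$ in a single arc, which precludes the accumulating sequences required for the other two kinds, and at a third-kind point one open half-ball misses $G$ entirely, which precludes the two-sided accumulation defining the second kind. With disjointness stated this way and openness obtained via Lemma \ref{L_ap}, your connectedness argument closes correctly.
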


\begin{proof}
By Lemma \ref{L_ap}, if $G$ is simple and $G\left(  t_{n}\right)$ converges
to $G\left(  t\right)$ then we have the convergence of arcs $G\left(
[t_{n}-\varepsilon,t_{n}+\varepsilon]\right)  \rightarrow G\left(
[t_{n}-\varepsilon,t_{n}+\varepsilon]\right)  $. It follows that $K_{i}\left(
G\right)  $ is open, and the conclusion follows from the connectedness of $\mathbb{R}^{+}$.
\end{proof}

Hence, one can speak of the kind of a simple ray.

\begin{lem}
\label{L_K1}Let $G$ be a simple geodesic ray of first kind on a convex
polyhedron $P$, then there exists a simple ray of second or third kind.
\end{lem}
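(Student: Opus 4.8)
The plan is to start from a simple geodesic ray $G$ of the first kind and analyze what "first kind" forces. Since every point of $G$ has arcwise connected intersection with small balls, the image $\mathrm{Im}(G)$ is locally an embedded arc; combined with $G$ being simple and infinite in length on a compact surface, $\mathrm{Im}(G)$ must accumulate on itself, so its closure $\Gamma = \overline{\mathrm{Im}(G)}$ is a nonempty compact set. First I would argue that $\Gamma$ is, near each of its points that avoids vertices, a $1$-dimensional lamination: the limit arcs exist by Lemma \ref{L_ap} (convergence of nearby arcs of a simple geodesic), they are geodesic, and by the first-kind hypothesis together with the angle restriction of Lemma \ref{lem1}, two leaves of $\Gamma$ passing through a common non-vertex point would have to make an angle in the finite set $F$; but a leaf-through-a-point that is a genuine accumulation of one-sided approaches is incompatible with local arcwise connectedness of $\mathrm{Im}(G)$ unless that angle is $0$, i.e. the leaves coincide. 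Hence through each non-vertex point of $\Gamma$ there is a unique leaf, and $\Gamma$ is a closed geodesic lamination on $P^\ast$, possibly with finitely many leaves terminating at negatively curved vertices.

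Next I would pass to the boundary of the lamination. Because $G$ is of the first kind, $G$ itself cannot be a leaf in the "interior" of $\Gamma$ (that would make $G$ accumulate on both sides, producing points of the second kind); so $G$ lies on the frontier of $\Gamma$ in $P$. I would then take a minimal sublamination $\Lambda \subset \Gamma$ (exists by compactness / Zorn applied to closed nonempty sublaminations), and look at a leaf $L$ of $\Lambda$. Every leaf of a minimal lamination is dense in $\Lambda$, and by minimality $\Lambda$ has no isolated leaf, so approaching $L$ from at least one side one meets other leaves of $\Lambda$ with $t_n \to \infty$ along $G$-parametrization transported to $L$. Reparametrizing $L$ as a ray (it is infinite in length, or periodic — a closed geodesic — in which case $P$ would be a flat torus and the statement is trivial since then there are rays of the third kind in abundance), I claim $L$ is a simple geodesic ray that is either of the second kind (leaves of $\Lambda$ accumulate on $L$ from both sides) or of the third kind (they accumulate from exactly one side, the other side being a complementary region of the lamination, which is disjoint from $\Gamma \supset \mathrm{Im}(G)$ so in particular disjoint from the tail of $L$). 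Verifying the relevant accumulation statement ($t_n \to \infty$, points tending to a given point of $L$, locally separated by $L$) uses minimality: the leaf $L$ returns near any of its own points infinitely often.

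The main obstacle I anticipate is making rigorous the dichotomy "second or third kind" for the leaf $L$: one must show that a complementary component of the minimal sublamination $\Lambda$, if it touches $L$ on one side, really does give an $\varepsilon$-ball half-disk missing $G$ entirely (third kind), while if no such component touches $L$ then leaves of $\Lambda$ crowd $L$ on both sides (second kind) — and this requires care because $G$ is not a leaf of $\Lambda$, so one has to separately ensure that the tail of $G$ does not sneak into the complementary region near $L$. This is handled by noting that $\mathrm{Im}(G) \subset \Gamma$ and the complementary region of $\Lambda$ in question, being open and connected and meeting no leaf of $\Lambda$, either lies entirely outside $\Gamma$ (giving the third-kind conclusion directly) or is swept by non-$\Lambda$ leaves of $\Gamma$ accumulating on $L$ from that side (giving the second-kind conclusion). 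A secondary technical point is the passage through negatively curved vertices: finitely many leaves of $\Gamma$ may end at such vertices, but this does not affect $\Lambda$ since a minimal lamination has no leaf with an endpoint, so we may simply discard those finitely many leaves at the outset.
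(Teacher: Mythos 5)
There is a genuine gap, and it sits exactly where the real work of this lemma lies. You treat $\Gamma=\overline{\mathrm{Im}(G)}$ as a geodesic lamination and then invoke structure theory (existence of a minimal sublamination, every leaf of a minimal lamination complete, endpoint-free, non-isolated and recurrent, the complementary-region dichotomy) that is a theorem about hyperbolic surfaces and does not transfer as stated to flat surfaces with conical singularities. On a convex polyhedron every vertex has \emph{positive} curvature (there are no ``negatively curved vertices'' to discard), no geodesic passes through a vertex, and limit arcs of $G$ can terminate at vertices; such singular leaves cannot be ``simply discarded'': removing them can destroy closedness, and a minimal closed union of leaves of $\Gamma$ could a priori be a single segment joining two vertices (or a finite union of such), in which case your leaf $L$ is not a ray at all. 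Proving that the closure of $G$ actually contains a complete simple geodesic ray is precisely the content of the paper's argument: it takes the limit segment $S$ through an accumulation point of $G$, prolongs it as a quasi-geodesic making angle $\pi$ on the side where $G$ accumulates, and uses the simplicity of $G$ together with Gauss--Bonnet near a vertex to show that the prolongation can never be forced to turn at a vertex of curvature greater than $\pi$, that hitting a $\pi$-curvature vertex forces $G$ to accumulate on the other side of $S$ so the construction can be restarted in the opposite direction, and that the prolongation meets each vertex at most once, so that a tail of it is a genuine geodesic ray on which $G$ accumulates. Your proposal assumes this conclusion (a complete, recurrent leaf) rather than proving it, so the central difficulty is bypassed, not resolved.

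Two further concrete errors. First, your treatment of the periodic case --- ``$L$ periodic implies $P$ is a flat torus, and then there are rays of the third kind in abundance'' --- cannot stand: a flat torus is not a convex polyhedron, convex polyhedra do carry simple closed geodesics, and Lemma \ref{L_K2} says there are \emph{no} rays of the third kind on a convex polyhedron, so they are certainly not abundant. The closed-leaf case must be excluded by an argument (in the paper: if $\overline{S}$ were periodic, the parallel nearby arcs of $G$ would close up, forcing $G$ itself to be a closed geodesic). Second, being of second or third kind is a statement about the new ray accumulating \emph{on itself} (points $x_n$ on that same ray with parameters tending to infinity), so even granting a complete leaf $L$, your verification of this self-recurrence again leans entirely on the unproved minimality machinery; by contrast the angle argument you do cite, Lemma \ref{lem1}, only controls angles between arcs and does not by itself produce the required recurrent complete geodesic.
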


\begin{proof}
Let $p$ be an accumulation point of the sequence $\left\{  G\left(  n\right)
\right\}  _{n\in\mathbb{N}}$. By Lemma \ref{lem1}, for large $n$, arcs of $G$
through $G\left(  n\right)  $ are parallel one to another. Let $S$ be a segment through
$p$, parallel to $G$. There is at least one side of $S$ where arcs of $G$
accumulate; we call it the side of $G$. We can prolong $S$ in the direction of
$G$ as a quasi-geodesic $\overline{S}$ with angle $\pi$ on the side of $G$ as
long as it does not meet any vertex of curvature at least $\pi$. But if
$\overline{S}$ meets a vertex of curvature more than $\pi$, then $G$ should
intersect in the vicinicy of this vertex, which is impossible. If
$\overline{S}$ meets a vertex of curvature $\pi$ after one or several vertices
of curvature less than $\pi$, then $G$ should self-intersect in the vicinity
of the last of those vertices (see Figure \ref{F6}). Hence, either $G$ is infinitely prolongable, or it meets a $\pi$-curved vertex before any other. In this case, $G$ accumulates on the other side of $S$ too, in the
opposite direction (see Figure \ref{F6}).

So we can define $\overline{S}$ in the other direction. Once again, either
$\overline{S}$ is infinitely prolongable, or it meets a $\pi$-curved vertex.
In this case the prolongation of any arc of $G$ near $p$ should be a closed
geodesic, and we get a contradiction.

It follows that $\overline{S}$ can be prolongated as a ray, at least in one
direction. Note that $\overline{S}$ cannot have any self-intersection, for it
is parallel to $G$ which is simple. If $\overline{S}$ were periodical then,
once again, arcs of $G$ would prolong as closed geodesics. Hence $\overline
{S}$ visits at most once each vertex, and so, after leaving the last one,
becomes a geodesic ray. Since $G$ is parallel to $\overline{S}$ and acumulate
along it, then $\overline{S}$ cannot be of the first kind.
\end{proof}

\begin{figure}[ptb]
\begin{center}
\includegraphics[width=.3\textwidth]{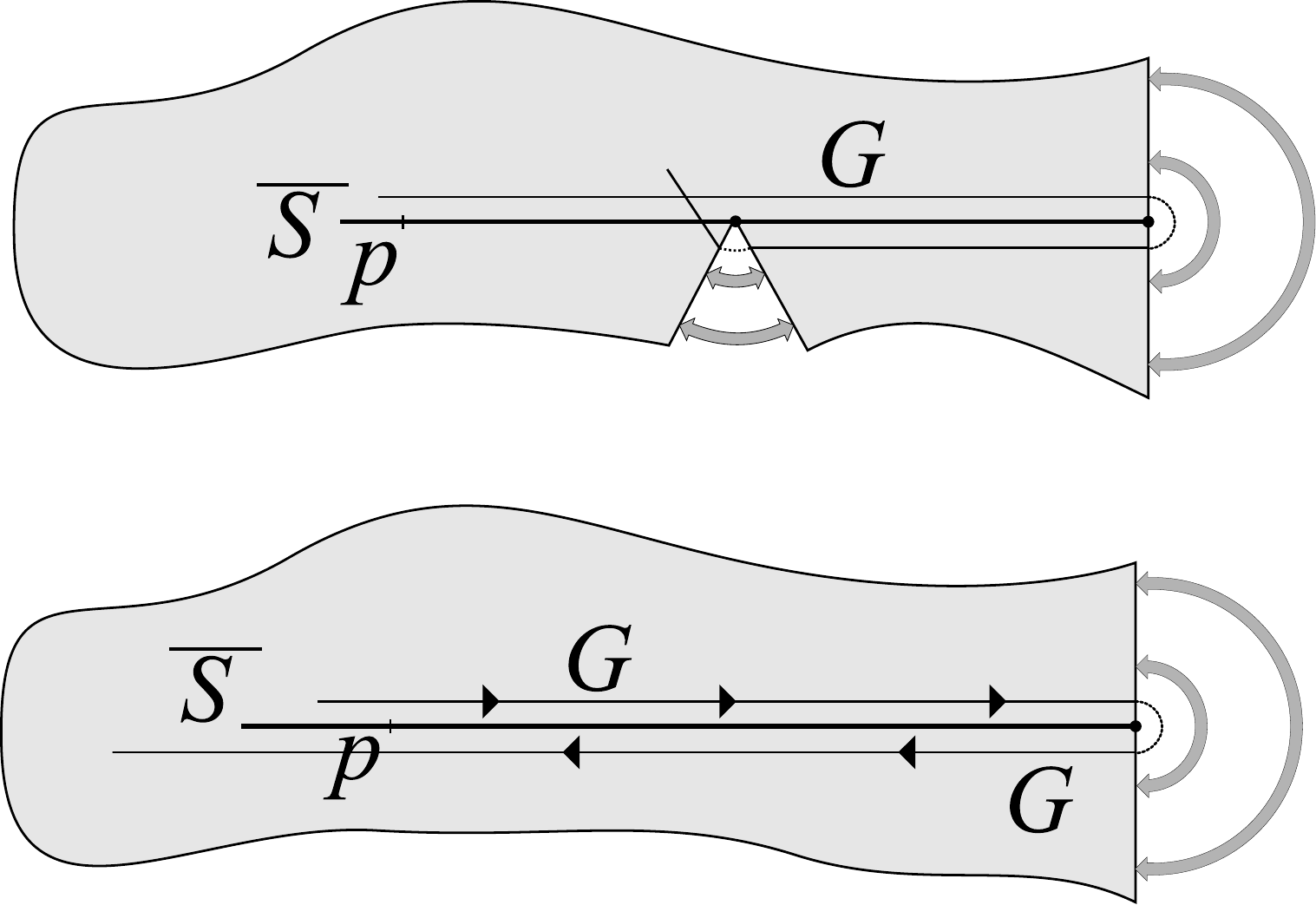}
\end{center}
\caption{Proof of Lemma \ref{L_K1}: if $\overline{S}$ meets a $\pi$-curved
vertex after another one, then $G$ should self-intersect. If it meets the $\pi
$-curved vertex first, then it can be defined in the other direction.}%
\label{F6}%
\end{figure}

\begin{lem}
\label{L_K2}On a convex polyhedron, there is no simple ray of third kind.
\end{lem}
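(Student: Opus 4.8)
The plan is to assume that $G$ is a simple geodesic ray of third kind on the convex polyhedron $P$ and to show that $\overline{\mathrm{Im}(G)}$ would then be a geodesic lamination incompatible with the positive curvature and the spherical topology of $P$.

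First I would pin down the set $L:=\overline{\mathrm{Im}(G)}$. By Lemma~\ref{L_CK} every point of $G$ is of third kind, hence (by definition) every $G(t)$ is a limit of points $G(t_n)$ with $t_n\to\infty$; so $\mathrm{Im}(G)\subset L$ and every point of $L$ is a $C^0$-limit of arcs of $G$. No such limit passes through a vertex $v$: a sequence $G(t_n)\to v$ would, by the Ascoli theorem, produce a minimizing geodesic arc through $v$, which is impossible since at a positively curved vertex the total angle is $<2\pi$, too small for a broken line to separate the directions there into two parts of measure $\ge\pi$. Hence $\mathrm{Im}(G)$, and so $L$, keeps a fixed distance $\ge\varepsilon$ from all vertices, every point of $L$ lies on a strict geodesic leaf, and, by Lemma~\ref{L_ap}, two leaves of $L$ cannot cross; thus $L$ is a geodesic lamination lying in the flat part of $P$. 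Finally $L$ has no closed leaf: an arc of $G$ entering a thin flat annular neighbourhood of a putative closed leaf $c$ is a straight segment, hence either parallel to $c$ --- in which case, prolonged inside $G$, it would make $G$ a closed geodesic, so of first kind, excluded --- or transverse to $c$, in which case it would have to cross $c$; but $G$ cannot cross $c$, for $c$ is a limit of arcs of $G$ and such a crossing would be a proper self-intersection of $G$. So $G$ does not accumulate on any closed geodesic.

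Next I would isolate the complementary region on the free side of $G$. For $x\in\mathrm{Im}(G)$ the third-kind hypothesis gives $\varepsilon(x)>0$ and an open half-disk $H_x\subset B(x,\varepsilon(x))$, on one fixed side of the arc of $G$ through $x$ and disjoint from $\mathrm{Im}(G)$; since $L=\overline{\mathrm{Im}(G)}$, $H_x$ is also disjoint from $L$, so $H_x\subset\Omega:=P\setminus L$. By Lemma~\ref{L_ap} the free side varies continuously along $\mathrm{Im}(G)$, hence is constant, and consecutive half-disks overlap, so $\bigcup_x H_x$ is connected and lies in a single component $U$ of $\Omega$. As every $x\in\mathrm{Im}(G)$ lies in $\overline{H_x}\subset\overline U$ but not in $U$, we get $\mathrm{Im}(G)\subset\partial U$, whence $L\subset\partial U$; and since $U$ is a connected component of $P\setminus L$ also $\partial U\subset L$, so $\partial U=L$.

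Finally --- the crux --- I would apply the Gauss--Bonnet theorem to the metric completion $\widehat U$ of $U$ equipped with its intrinsic path metric. The key claim is that $\widehat U$ is a compact flat surface with geodesic boundary: it has finite area, its only interior singularities are vertices of $P$ and thus have positive curvature, and --- because $L$ avoids the vertices while, by Lemma~\ref{lem1}, any two arcs of $G$ on a common face are parallel --- $U$ has an essentially rectangular shape along each leaf of $L$, which both smooths $\partial\widehat U$ and rules out the crown-type ends that would make $\widehat U$ non-compact. Granting this, $\partial\widehat U$ is a finite disjoint union of closed geodesics, so $\sum_{v\in U}\omega(v)=2\pi\,\chi(\widehat U)$; as the left-hand side is $\ge 0$ and $P$ is orientable, $\widehat U$ is a disk or an annulus, and in either case has a closed geodesic as a boundary component --- a closed leaf of $L$, contradicting the last remark of the first paragraph. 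The main obstacle is exactly this compactness claim: one has to show that the free side of a simple ray of third kind cannot spiral off indefinitely, and it is here that convexity (all vertices positively curved, so $L$ stays away from them) together with Lemma~\ref{lem1} (arcs on a face are parallel) are used in an essential way.
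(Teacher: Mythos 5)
Your strategy (take $L=\mathrm{cl}(\operatorname{Im}(G))$, show it is a vertex-free geodesic lamination with no closed leaves, and apply Gauss--Bonnet to the completion of the complementary component on the free side) is genuinely different from the paper's local argument, but it has two gaps, one of which you yourself flag and neither of which is closed. First, the claim that $L$ stays a definite distance from the vertices is not established: your Ascoli argument assumes the limit of the arcs $G([t_n-\varepsilon,t_n+\varepsilon])$ with $G(t_n)\to v$ is a (minimizing) geodesic through $v$, but a limit of geodesic arcs on a convex polyhedron can perfectly well pass through a positively curved vertex --- it is then a broken line with angle $\pi$ on the side the arcs approach from and angle $\theta(v)-\pi<\pi$ on the other, i.e.\ a quasi-geodesic, and no contradiction arises. (On an isosceles tetrahedron a simple dense ray accumulates at every vertex, so ``closures of simple rays avoid vertices'' is simply false in general; whether it holds in the third-kind situation is exactly something that would have to be proved.) This is not a cosmetic point: without it your $L$ need not be a lamination by strict geodesics, the boundary of $U$ need not be geodesic, and the whole Gauss--Bonnet setup collapses. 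Note that the paper's proof takes the opposite tack: it prolongs the limit arc $A$ as a quasi-geodesic $\overline{A}$ with angle $\pi$ on the accumulation side precisely because it may run into vertices, and it analyzes the cases of curvature $<\pi$, $=\pi$, $>\pi$ separately.

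Second, the step you call the crux --- that the completion $\widehat U$ of the free-side component is a \emph{compact} flat surface whose boundary is a finite union of \emph{closed} geodesics --- is asserted, not proved. Boundary leaves of the closure of a non-closed geodesic are typically non-compact (spiralling) leaves, and ruling this out is exactly the content of the lemma; saying that Lemma~\ref{lem1} makes $U$ ``essentially rectangular'' and so excludes crown-type ends is a restatement of what must be shown, not an argument. (Your no-closed-leaf step also needs repair: an arc of $G$ entering a thin annular neighbourhood of a closed leaf transversally need not cross the core; you would have to invoke the finiteness of the angle set from Lemma~\ref{lem1} to keep transversal angles bounded away from $0$.) By contrast, the paper avoids any global classification of $\mathrm{cl}(\operatorname{Im}(G))$: it works entirely inside the ball $B_0$ of the third-kind definition, compares $\overline{A}$ with the prolonged arcs $\overline{A_n}$ via the distance function $f_n$, uses the $\pi$-vertex analysis plus orientability of the strip to show $\overline{A}$ must re-enter $B_0$ at negative ordinate, and then forces some $\overline{A_m}$ (an arc of $G$) into the forbidden half of $B_0$. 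As it stands, your proposal is an interesting alternative outline, but both the vertex-avoidance claim and the compactness/closed-boundary claim are genuine missing pieces, so the proof is incomplete.
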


\begin{proof}
Let $x=G\left(  t\right)  $ a point of third kind. By definition, there exists
$\varepsilon>0$ such that one half of $B_{0}=B\left( x,\varepsilon\right)$
does not intersect $G$, and there exist points $x_{n}=G\left( t_{n}\right)$ tending to
$x$ when $t_{n}$ tends to infinity. Let $A_{n}=G|[t_{n}-\varepsilon
,t_{n}+\varepsilon]$ and $A=G|[t-\varepsilon,t+\varepsilon]$. By Lemma
\ref{lem1}, those arcs are all parallel for large $n$. Equip $B$
with an orthonormal coordinate system such that $A\left(  t\right)  =\left(
t,0\right)  $ and each $A_{n}$ lies in the positive ordinate half plane. By
extracting subsequences, we can assume without loss of generality that (1)
the sequence $\left\{ t_{n}\right\} _{n}$ is increasing, (2) the sequence
$\left\{  d\left(  x_{n},A\right)  \right\}  _{n}$ is decreasing, and (3) the
arcs $A_{n}$ are all oriented in the same direction, say as the $x$-axis.

Prolong $A$ in the direction of the $x$-axis as a quasi-geodesic $\overline
{A}$ with angle $\pi$ on the $A_{n}$'s side, as long as $\overline{A}$ does
not meet any vertex of curvature greater than or equal to $\pi$. Indeed, if
it meets a vertex with curvature more than $\pi$, then the prolongation of
$A_{n}$ should self-intersect in the vicinity of this vertex, for $n$ large
enough. Assume now that $\bar{A}$ meets a $\pi$-curved vertex. Then either
$\overline{A_{n}}$ will self-intersect (in the case that $\overline{A}$ passes
through a vertex) or will intersect the half of $B_{0}$ of negative ordinates
(see Figure \ref{F5}). In both cases we get a contradiction. Hence
$\overline{A}$ is a quasi-geodesic ray.

Assume that $\overline{A}$ is parametrized in such a way that $\overline
{A}\left(  t\right)  =\left(  t,0\right)$ for $t$ small enough. Denote by
$\overline{A_{n}}$ the parametrization of $G$ such that $\overline{A_{n}%
}\left(  t\right)  =\left(  t,\varepsilon_{n}\right)  $ for small $t$. Put
$f_{n}\left(  t\right)  =d\left( \overline{A}\left( t\right) ,\overline
{A_{n}}\left(  t\right)  \right)$. We claim that, for $n$ large enough,
$f_{n}\left(  t\right)  \leq f_{n}\left(  0\right)  =\varepsilon_{n}$, with
equality for any $t$ such that $\overline{A}\left(  t\right) $ is not too
close from a $\pi$-curved vertex. Indeed, $f_{n}$ is constant as long as the
\textquotedblleft strip\textquotedblright\ delimited by $\overline{A}$ and
$\overline{A_{n}}$ contains no vertices. But if it happens that some vertex of
curvature distinct from $\pi$ is included in the \textquotedblleft
strip\textquotedblright, then $\overline{A_{n}}$ and $\overline{A}$ should
intersect in its vicinity, provided there are no other vertices around.
Moreover, the distance between the vertex and the intersection point depends
only on the curvature of the vertex and on $\varepsilon_{n}$, and tends to $0$
when $\varepsilon_{n}$ becomes smaller and smaller. Hence, by choosing $n$
large enough, we can ensure that no other vertex will interfere. If
$\overline{A_{n}}$ intersected $\overline{A}$, then it would also intersect
$\overline{A_{m}}$ for $m>n$, in contradiction to the simpleness of $G$ .
Hence only $\pi$-curved vertices may interfere. But in this case, it is easy to
see that $f_{n}\left(  t\right)  $ will become again constantly equal to
$\varepsilon_{n}$, when $\overline{A}$ leaves the vicinity of the vertex.

Now $A_{n+1}$ is a subarc of $\overline{A_{n}}$, meaning that $\overline
{A_{n}}$ will enter again into $B_{0}$ at ordinate $\varepsilon_{n+1}%
<\varepsilon_{n}$. By the claim, $\overline{A}$ will also enter again into
$B$, at ordinate $\varepsilon_{n+1}\pm\varepsilon_{n}$. Indeed, $\varepsilon
_{n+1}+\varepsilon_{n}$ is impossible, because it would imply that the strip
between $\overline{A}$ and $\overline{A_{n}}$ is twisted, in contradiction
to the orientability of $P$. It follows that $\overline{A}$ enters into
$B_{0}$ with an ordinate $y<0$. Let $m$ be large enough to ensure
$\varepsilon_{m}<-y$; due to the claim, $\overline{A_{m}}$ should enter too
into the negative ordinate part of $B_{0}$, and we get a contradiction. Hence
$K_{3}\left(  G\right)$ is empty.
\end{proof}

\begin{figure}[ptb]
\begin{center}
\includegraphics[width=.3\textwidth]{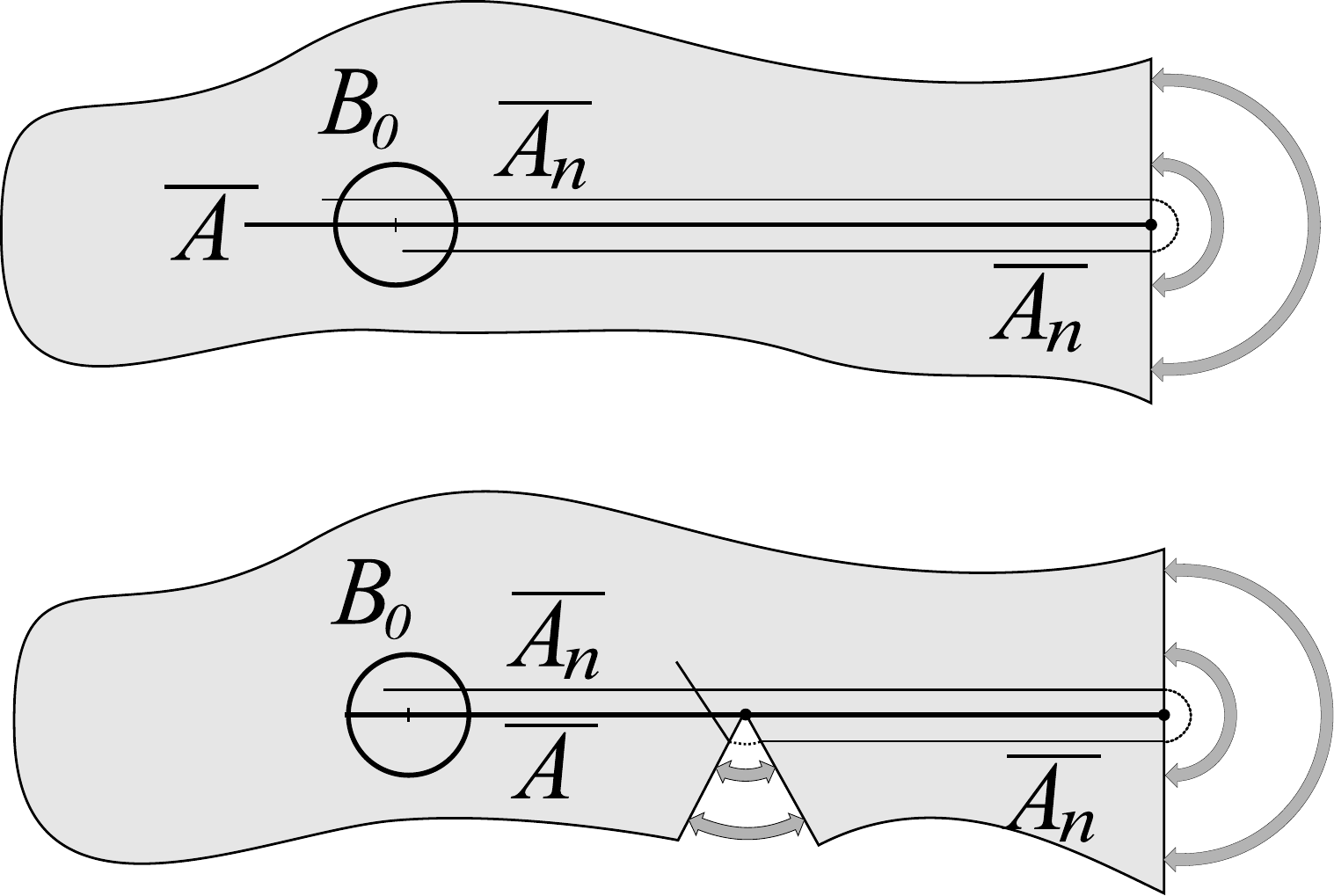}
\end{center}
\caption{Proof of Lemma \ref{L_K2}: if $\overline{A}$ meets a $\pi$-curved
vertex, then $\overline{A_{n}}$ should either self-intersect or enter the
negative ordinate part of $B_{0}$.}%
\label{F5}%
\end{figure}

\begin{thm}
\label{convex}
Let $G$ be a simple geodesic ray on a convex polyhedron $P$. Then $G$ is dense in $P$.
\end{thm}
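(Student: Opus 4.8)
The plan is to argue via the notion of \emph{kind} already developed. By Lemma~\ref{L_CK}, $G$ has a well-defined kind; by Lemma~\ref{L_K2} it is not of the third kind. If $G$ is of the first kind, Lemma~\ref{L_K1} yields a simple ray of the second or third kind, hence of the second kind by Lemma~\ref{L_K2}, and the proof of Lemma~\ref{L_K1} shows that this ray $\overline{S}$ is parallel to $G$ and that $G$ accumulates along it, so $\mathrm{Im}(\overline{S})\subset\overline{\mathrm{Im}(G)}$. It therefore suffices to prove that a simple ray $G$ of the second kind is dense, since then in the first-kind case $\overline{\mathrm{Im}(G)}\supset\overline{\mathrm{Im}(\overline{S})}=P$ as well.

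So let $G$ be of the second kind and suppose, for contradiction, that $Q:=\overline{\mathrm{Im}(G)}\neq P$; pick a connected component $\Omega$ of the nonempty open set $P\setminus Q$ and a non-vertex point $x$ of its frontier $\partial\Omega\subset Q$. In a small flat ball $B$ about $x$ with no vertex, the argument of Lemma~\ref{L_K1} applies: since $G$ is simple, any two arcs of $G$ meeting $B$ must be parallel (by Lemma~\ref{lem1} their angle lies in a finite set, and a nonzero value would make $G$ self-intersect near $x$), so by Lemma~\ref{L_ap} both $\mathrm{Im}(G)\cap B$ and $Q\cap B$ are unions of parallel segments. Hence $\partial\Omega$ is locally a geodesic segment with $\Omega$ lying on one side and with corners only at vertices of $P$; choosing $x$ extremal in the direction transverse to that segment, we may assume no point of $Q$ lies on the $\Omega$-side near $x$.

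I would then prolong the local segment through vertices by the quasi-geodesic rule of Lemmas~\ref{L_K1} and~\ref{L_K2} — angle $\pi$ on the side away from $\Omega$ — to obtain a simple ray $\overline\sigma$ parallel to $G$, with $\mathrm{Im}(\overline\sigma)\subset Q$, along which $G$ accumulates, and with $\Omega$ occupying an entire $Q$-free side near $x$. As in those lemmas, a vertex of curvature $\geq\pi$ on $\partial\Omega$, or a corner too sharp on the side away from $\Omega$, immediately forces $G$ to self-intersect; so at every corner $v$ of $\partial\Omega$ the interior angle of $\Omega$ satisfies $\theta_v\leq\pi-\omega(v)$, and $\overline\sigma$ is neither of the second kind (no points of $\mathrm{Im}(\overline\sigma)\subset Q$ on the $\Omega$-side near $x$) nor, by Lemma~\ref{L_K2}, of the third kind. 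Applying Gauss--Bonnet to $\Omega$ should then close the argument: if $\Omega$ is an annulus it is a flat cylinder bounded by two vertex-free closed geodesics, and the connected set $\mathrm{Im}(G)$, being disjoint from $\Omega$, cannot accumulate on both boundary circles, a contradiction; if $\Omega$ is a disk, Gauss--Bonnet forces $\omega(\Omega)+\sum_{v\in\partial\Omega}\omega(v)>0$ together with $\theta_v\leq\pi-\omega(v)$, and the parallel line field carried by $\mathrm{Im}(G)$ along all edges of $\partial\Omega$ cannot be reconciled with this positive enclosed curvature.

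The step I expect to be the main obstacle is exactly this last one — eliminating a disk-shaped complementary component bounded by a ``convex geodesic polygon'' all of whose edges are parallel to the line field of $G$. Here one must combine the holonomy constraint around $\partial\Omega$ (which would force $\omega(\Omega)+\sum_{v\in\partial\Omega}\omega(v)\in\pi\mathbb{Z}$, contradicting Proposition~\ref{P_kpi} unless $P$ is parallel), the degeneracy of a flat polygon whose edges are all parallel, and the self-intersection mechanisms of Lemmas~\ref{L_K1} and~\ref{L_K2} (distinguishing, as in the proof of Lemma~\ref{L_K2}, vertices of curvature $\pi$ from the others), to show that some arc of $G$ must in fact enter $\Omega$. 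Once no complementary component survives, $Q=P$, i.e. $G$ is dense.
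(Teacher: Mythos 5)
Your reduction to the second-kind case is exactly the paper's: exclude the third kind by Lemma~\ref{L_K2}, and in the first-kind case pass to the ray $\overline{S}$ of Lemma~\ref{L_K1}, whose density would give density of $G$ since $G$ accumulates along it. The problem is that the heart of the theorem --- a simple ray of the second kind has closure equal to $P$ --- is precisely the step you leave open. The paper proves it by redoing, verbatim, the ``strip'' construction from the end of the proof of Lemma~\ref{L_K2}: one takes $o\notin \mathrm{cl}(\operatorname{Im} G)$, a nearest non-vertex point $x$ of $\mathrm{cl}(\operatorname{Im} G)$, the limit arc $A$ through $x$ and the parallel arcs $A_n=G|[\tau_n-\varepsilon/2,\tau_n+\varepsilon/2]$ accumulating on one side of $A$ (one side is free because the ball $B_0$ around $o$ is avoided); then one prolongs $A$ as a quasi-geodesic $\overline{A}$ with angle $\pi$ on the accumulating side, shows $d(\overline{A}(t),\overline{A_n}(t))\le\varepsilon_n$ with equality away from $\pi$-curved vertices, and uses the fact that $A_{n+1}$ is a subarc of $\overline{A_n}$ to force $\overline{A}$ to re-enter the chart at ordinate $\varepsilon_{n+1}-\varepsilon_n<0$, whence $\overline{A_m}$ for large $m$ enters the forbidden side --- a contradiction. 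Nothing of this quantitative return argument appears in your sketch; you replace it by a Gauss--Bonnet analysis of a complementary component $\Omega$, and you yourself flag that the disk case is ``the main obstacle''. That admitted obstacle is not a technicality: it is the actual content of the theorem, so the proposal is incomplete.

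Moreover, the route you sketch for closing the disk case does not work as stated. Gauss--Bonnet applied to a disk component $\Omega$ bounded by a geodesic polygon with corners at vertices $v$ of interior angle $\theta_v\le\pi-\omega(v)$ only yields $\sum_v\omega(v)+\omega(\Omega\cap V)\le 2\pi$, which is no contradiction on a positively curved sphere; and the holonomy remark is backwards: Proposition~\ref{P_kpi} is a \emph{consequence} of parallelism, so finding that the curvature enclosed by $\partial\Omega$ lies in $\pi\mathbb{Z}$ contradicts nothing, and you cannot hope to exclude the parallel case anyway, since isosceles tetrahedra are convex, parallel, and carry simple rays --- the theorem must hold for them too. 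Any successful argument has to exploit the accumulation dynamics of $G$ along $\partial\Omega$ (as the paper's strip argument does), not merely curvature bookkeeping around $\Omega$. There are also smaller globalization debts in your setup (that $\partial\Omega$ is a finite geodesic polygon, that $\overline\sigma$ stays in $Q$ and is simple, and that kinds --- defined for geodesic rays --- can be invoked for the quasi-geodesic $\overline\sigma$), but the decisive gap is the missing second-kind density argument.
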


\begin{proof}
By Lemma \ref{L_K2}, $G$ is not of the third kind. Assume first that $G$ is of
the second kind.

Assume that $\mathrm{cl} \left(  \operatorname{Im}\left( G\right) \right)$ is not the whole polyhedron 
and let $o$ be a point in $P\setminus\mathrm{cl} \left(\operatorname{Im}\left( G\right) \right)$. 
Let $B_{0}=B\left( o,d\left(o,\mathrm{cl}\left( S\right) \right) \right)$ and 
$x\in\partial \mathrm{cl} \left(S\right)  \cap\partial B_{0}$. 
By choosing $o$ close enough to a flat point of $\mathrm{cl} \left( \operatorname{Im}\left( G\right) \right)$, 
one can assume without loss of generality that $x$ is not a vertex. Choose
$\varepsilon>0$ smaller than the half-distance between $x$ and its closest vertex. 
Let $x_{n}=G\left( \tau_{n}\right)$ be a point in $\mathrm{\operatorname{Im}}\left( G\right)$ tending to $x$ such that
$\tau_{n}\rightarrow\infty$. Let $A_{n}=G|[\tau_{n}-\frac{\varepsilon}{2},\tau_{n}+\frac{\varepsilon}{2}]$. 
By Lemma \ref{lem1}, those arcs are all parallel for large $n$. 
Denote by $A$ the limit of $A_{n}$. 
Let $B$ be a ball centered at $p$, small enough to ensure that all $A_{n}$s that
intersect $B$ are parallel. Equip $B$ with an orthonormal coordinate system
such that $A\left(  t\right)  =\left( t,0\right)$ and $A_{n}$ lies in the
positive ordinate half plane. 
By Lemma \ref{L_K2}, all points of $G$ are of the
second kind, meaning that $x_{n}\notin A$, for otherwise there would be some
arcs of $G$ on both sides of $A$, and those with negative coordinates should
intersect $B_{0}$. Hence, by extracting subsequences, we can assume without
loss of generality that (1) the sequence $\left\{ \tau_{n}\right\} _{n}$
is increasing, (2) the sequence $\left\{  d\left(  x_{n},A\right)  \right\}
_{n}$ is decreasing, and (3) the arcs $A_{n}$ are all oriented in the same
direction, say as the $x$-axis.

Now, we get a contradiction using exactely the same construction as in the end
of the proof of Lemma \ref{L_K2}.

If $G$ were of the first kind, then we could apply the above argument to the
ray $\overline{S}$ provided by Lemma \ref{L_K1}. Hence $\overline{S}$ would be
dense, and then $G$ too; in contradiction to the fact that it is of
first kind.
\end{proof}


\section{Examples and questions}

\textbf{1.} As mentioned in the introduction, isosceles tetrahedra are the
only convex polyhedra admitting an unbounded length spectrum. In the light of
our result, one can ask if a polyhedron with an unbounded length spectrum
should be parallel. This question seems especially natural knowing that any
parallel polyhedron admits such a length spectrum \cite{M2}. The answer,
however, is negative, as shown by the following example.

\begin{example}
Let $a\in\left]  0,1\right[  $ be an irrational number. Consider the double
$D$ of the unit square $[0,1]\times\lbrack0,1]$. As a double of rectangle, $D$
is a degenerate isosceles tetrahedron, and so is parallel. Cut $D$ along the
segment $\sigma$ of the upper face (say) corresponding to $\left\{  a\right\}
\times\lbrack1/3,2/3]$. We obtain a manifold with boundary. Glue any polygon
of perimeter $2/3$ along its boundary in order to obtain a polyhedron $P$
homeomorphic to the sphere. $P$ is clearly not parallel, for it has vertices
whose curvature is not divisible by $\pi$. There is a natural injection
$i:D\setminus\sigma\rightarrow P$. Let $\gamma_{n}$ be the maximal geodesic of
$D$ starting at $\left(  a,0\right)  $ on the upper face (say), directed by
$\left(  1/n,1\right)  $. It is easy to see that $\gamma_{n}$ will never cross
$\sigma$, and so, is the image under $i$ of a closed geodesic of $D$ of length
$2n\sqrt{1+\frac{1}{n^{2}}}$.
\end{example}

\bigskip

\textbf{2. }The existence of only one simple dense geodesic ray does not
guarantee that $P$ is parallel, as illustrated by the following example. We do
not know if there exists non-orientable polyhedra with two (orthogonal) simple
dense rays.

\begin{example}
Let $a\in\lbrack0,1]\setminus\mathbb{Q}$. Consider the unit square
$[0,1]\times\lbrack0,1]$ with boundary identified as shown on Figure \ref{F3}:
$A$ on $A$, $B$ on $B$ and $C$ on $C$, respecting the orientation given by the
small white triangles. The resulting polyhedron has non-orientable genus $3$,
and a unique vertex of curvature $-2\pi$. It is easy to see that a line
starting at any point of abscissa $x\notin\mathbb{Q}+a\mathbb{Q}$ in the
direction of the $y$-axis is dense.
\end{example}

\begin{figure}[ptb]
\begin{center}
\includegraphics[width=.3\textwidth]{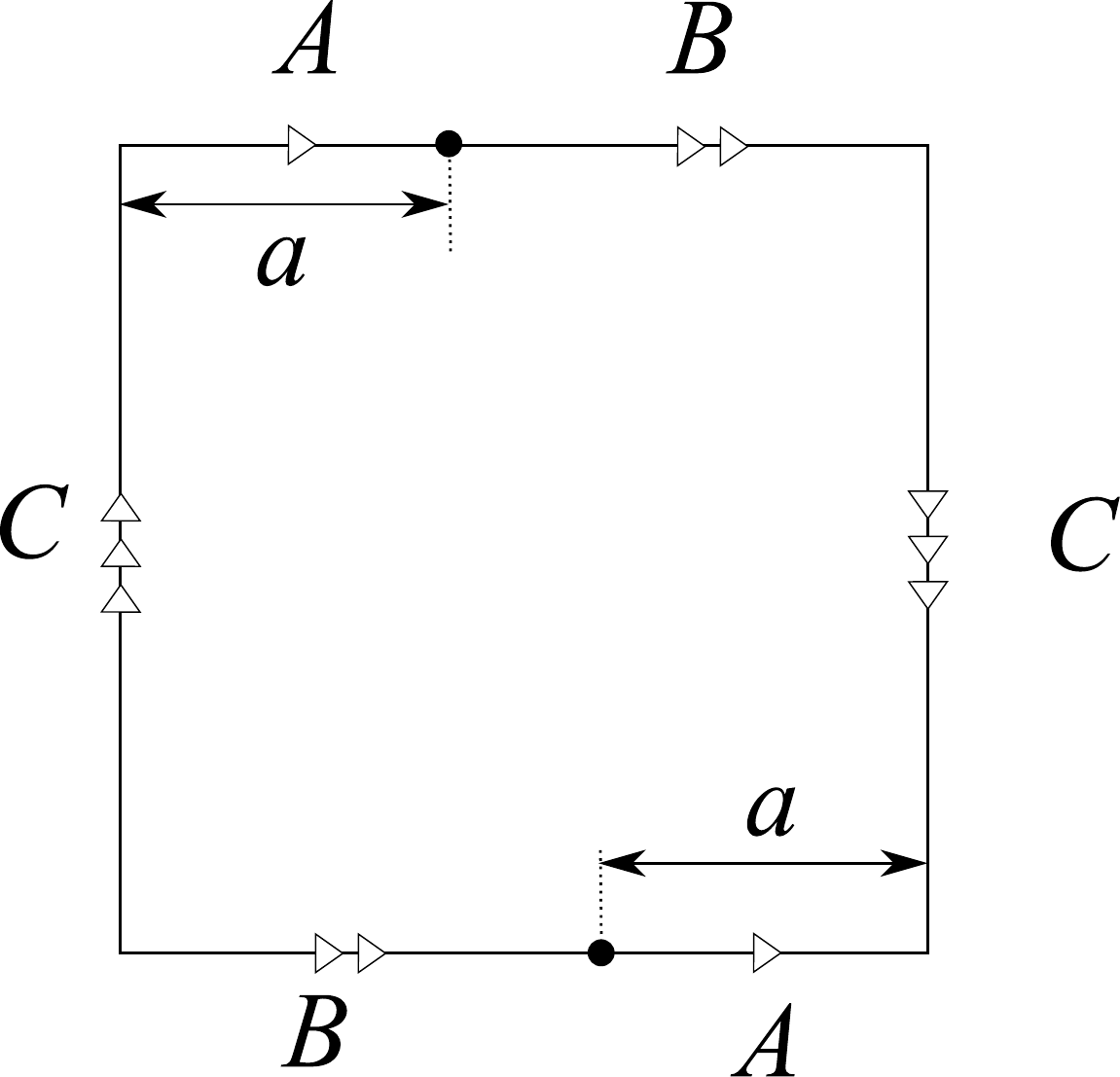}
\end{center}
\caption{Example of non-parallel polyhedron with simple dense geodesic ray.}%
\label{F3}%
\end{figure}

\bigskip

\textbf{3.} Without the convexity hypothesis, the existence of a simple
geodesic ray does not guarantee the parallelism.

\begin{example}
Let $P$ be a parallel polyhedron and $G$ a simple dense geodesic on $P$. Let
$\sigma$ be a segment disjoint of (and consequently parallel to)
$\operatorname{Im}\left(  G\right)  $. Cut $P$ along $\sigma$ and glue any
polygon whose perimeter equals the double of the length of $\sigma$. Denote by
$Q$ this new polyhedron and by $i:P\setminus\sigma\rightarrow Q$ the natural
injection. Then $i\circ G$ is a simple geodesic ray of $Q$, which is not dense
in $G$, for it avoids the glued polygon. Of course, $Q$ is not parallel.
\end{example}

\bigskip

\textbf{4. }We saw (Proposition \ref{p_AGS}) that on a parallel polyhedron all
strict geodesics are simple.

\begin{question}
Does the fact that all geodesics are simple characterizes parallel polyhedra?
\end{question}

\bigskip

\textbf{5.} V. Yu. Protasov \cite{P} showed that an unbounded length spectrum characterizes
isosceles tetrahedra among convex polyhedra, and his result has recently been strenghtened to convex surfaces 
by A. Akopyan and A. Petrunin \cite{AP}.
The following question remains open.

\begin{question}
Are isosceles tetrahedra the only convex surfaces with a simple dense geodesic ray? A
simple geodesic ray?
\end{question}

Acknowledgements. Jo\"{e}l Rouyer acknowledges support from the {\it Centre Francophone de Math\'ematique \`a
l'IMAR} and Costin V\^{\i}lcu acknowledges support from the {\it GDRI ECO-Math}.


\end{document}